\definecolor{blue}{RGB}{0,0,255} 
\newtheorem{algorithm}{Algorithm}
\newtheorem{theorem}{Theorem}
\newtheorem{corollary}{Corollary}
\newtheorem{lemma}{Lemma}
\newtheorem{remark}{Remark} 
\newcommand{\argmin}{\mathop{\rm argmin}}
\newtheorem{assumption}{Assumption}
\newtheorem{definition}{Definition}
\newcommand{\za}[1]{{\color{black}#1}}
\newcommand{\zr}[1]{{\color{black}#1}}
\newcommand{\blue}[1]{{\color{black}#1}}
\newcommand{\eyh}[1]{{\textcolor{black}{#1}}}
\newcommand{\eyy}[1]{{\textcolor{black}{#1}}}
\newcommand{\ey}[1]{{\textcolor{black}{#1}}}
\newcommand{\zi}[1]{{\color{black}#1}}
\def\grad{\nabla}
\def\bD{\mathbf{D}}
\def\cC{\mathcal{C}}
\def\cE{\mathcal{E}}
\def\cG{\mathcal{G}}
\def\cO{\mathcal{O}}
\def\cS{\mathcal{S}}
\def\cU{\mathcal{U}}
\def\cX{\mathcal{X}}
\def\cY{\mathcal{Y}}
\def\mE{\mathbb{E}}
\def\fprod#1{\left\langle#1\right\rangle}
\def\norm#1{\left\|#1\right\|}
\newcommand{\reals}{\mathbb{R}}
\title{\LARGE \bf
Variance-reduction for Variational Inequality Problems with Bregman Distance Function
}
\author{
  \begin{tabular}{c}
    Zeinab Alizadeh \textsuperscript{*},  Erfan Yazdandoost Hamedani\textsuperscript{*}, and Afrooz Jalilzadeh\footnote{Department of Systems and Industrial Engineering, The University of Arizona, Tucson, AZ, USA.\\
     \texttt{\{zalizadeh, erfany , afrooz\}@arizona.edu}}
  \end{tabular}
}
\date{}
\begin{document}

\maketitle
\thispagestyle{empty}
\pagestyle{empty}


\begin{abstract}
In this paper, we address variational inequalities (VIs) with a finite-sum structure. We introduce a novel and unified stochastic variance-reduced algorithm, utilizing the Bregman distance function, that can be applied to both monotone and non-monotone settings. We establish optimal convergence guarantees under the monotone case. For the non-monotone setting, we explore a structured class of problems that exhibit weak Minty solutions and analyze the complexity of our method, demonstrating improvements over existing approaches. Numerical experiments are provided to showcase the superior performance of our algorithm compared to state-of-the-art methods.
\end{abstract}

\begin{keywords}
Variational Inequality, Variance Reduction, Bergman Distance
\end{keywords}


\section{Introduction}\label{intro}
Let ($\mathcal X, \|\cdot\|_\mathcal{X}$) be a finite-dimensional normed vector space, with dual space ($\mathcal{X^*}, \|\cdot\|_\mathcal{X^*}$). In this paper, consider a Variational Inequality (VI) problem to find $x^*\in X$ such that
\begin{align}\label{VI def}
     \langle F(x^*),x-x^*\rangle +h(x)-h(x^*)\geq 0,\quad \forall x\in X,
\end{align}
where $X \subseteq \mathcal X $ is a nonempty, closed, and convex set; $F: \mathcal{X} \to  \mathcal X^*$ is a Lipschitz continuous operator; and $h:  \mathcal{X} \to  \mathbb{R} \cup \{+\infty\}$ is a proper convex lower-semicontinuous function. VI problems have received a lot of attention recently due to their general formulation that subsumes many well-known problems including (un)constrained optimization, saddle point (SP) problems, and Nash Equilibrium games \cite{kulkarni2012revisiting,ui2016bayesian}. These formulations arise in various fields such as machine learning \cite{simonetti2022example,cinelli2021variational}, robotics \cite{yang2024enhanced}, game theory \cite{guo2021variational,parise2019variational,singh2018variational,zhang2022variational}, etc. 
Developing efficient algorithms for solving large-scale VIs can immensely contribute to the computational advancement of these fields. 

One of the main structures commonly arise in a large-scale setting is when the operator $F$ consists of a large (finite) number of components, i.e., $F=\frac{1}{n}\sum_{i=1}^n F_i$. In this setting, when $n$ is large, evaluation of the operator $F$ (which may correspond to first-order information) can be prohibitively expensive. While stochastic methods have been extensively studied for a more general setting when the operator $F$ is represented by the expectation of a random variable, there is a significant gap in the convergence rate guarantee compared to deterministic methods. Consequently, variance reduction techniques have been developed to fill this gap to improve the complexity of deterministic methods. While variance reduction techniques have been successfully applied to minimization problems \cite{allen2016variance,kavis2022adaptive}, their naive extension to VI problems does not improve the complexity \cite{Alacaoglu2022Stochastic}. On the other hand, proximal-gradient-based methods with Bregman divergence have been shown to enhance algorithmic flexibility across various contexts, resulting in notable computational efficiencies in specific scenarios \cite{wang2023bregman,ahookhosh2021bregman}. The integration of Bregman distance with variance reduction for saddle point and VI problems poses an additional layer of complexity to algorithmic analysis. These challenges have been addressed by introducing retraction steps, a novel approach first implemented in the context of convex-concave saddle point problems \cite{yazdandoost2023stochastic} and later in \cite{Alacaoglu2022Stochastic} for monotone VIs. While prior methods rely on SVRG-based techniques, our approach introduces a novel variance-reduction algorithm inspired by SPIDER \zr{(see ; \cite{fang2018spider})}, offering a unified framework for solving both monotone and non-monotone VIs. This novel integration prompts the following research question: 

\begin{center}
\emph{Can we design a variance-reduced algorithm that uses Bregman distance to solve both monotone and non-monotone VIs within a unified framework?} 
\end{center}

In this paper, we answer the above question. Specifically, we introduce a novel variance-reduced method capable of utilizing Bregman distance and demonstrate its complexity results. Moreover, we show that under mild assumptions, our method can also handle the case where $F$ is non-monotone, providing a unified approach for both monotone and non-monotone VIs.

\subsection{Related work}
\zr{In the past few decades, there have been various methods developed for solving VIs with convergence rate guarantee, such as Extragradient (EG)/Mirror-Prox (MP)  \cite{juditsky2011solving,nemirovski2004prox}, forward-backward-forward (FBF) \cite{tseng2000modified}, dual extrapolation \cite{nesterov2007dual}, or reflected gradient/forward-reflected-backward (FoRB) \cite{malitsky2020forward}. More specifically, FBF improves the classical forward-backward scheme by incorporating an extra forward step, and relaxing the need for cocoercivity assumption. FoRB, on the other hand, simplifies the update rule by using only one forward and one backward step while still maintaining convergence under monotonicity, making it suitable for problems where full extragradient steps are computationally expensive.}\zr{While these methods differ in their use of operator calls and projections, } they typically demonstrate a complexity of $\cO(1/\epsilon)$ when dealing with monotone VIs in deterministic settings. The significance of stochastic VIs has grown, particularly in scenarios involving uncertainty, where methods such as Sample Average Approximation (SAA) and Stochastic Approximation (SA) come into play. SAA involves approximating the expected value of stochastic mapping by averaging over a large number of samples, while SA employs a (mini-batch) sample at each iteration. Notably, the stochastic variants of these methods are characterized by a complexity of $\cO(1/\epsilon^2)$ when addressing monotone VIs \cite{nemirovski2009robust,juditsky2011solving}.

In the optimization literature with a finite-sum objective function, variance reduction techniques, pioneered by classical works such as \zr{SVRG \cite{johnson2013accelerating}, SAGA \cite{defazio2014saga}, SARAH \cite{nguyen2017sarah}, and SPIDER \cite{fang2018spider}}, aim to improve deterministic methods. They achieve this by offering unbiased estimators of gradients while minimizing the variance of the error in gradient estimation. \zr{ These algorithms obtain faster convergence rates--often linear under strong convexity--by using recursive update rules or control variates to refine noisy gradient estimates based on previously computed information, while preserving the low per-iteration computational cost characteristic of stochastic gradient methods.}  Recently, there has been a growing interest in developing variance-reduced techniques for saddle point and VI problems, which we discuss next. 

\noindent\textbf{SP problems:} There have been various attempts to incorporate variance reduction techniques into primal-dual algorithms for solving SP problems, mostly when the objective function is strongly-convex strongly-concave such as \cite{palaniappan2016stochastic,lian2017finite,zhang2019composite,devraj2019stochastic}, and a few others consider a more general setting--see  \cite{luo2021near} and the reference therein. In particular, under a convex-concave setting and Euclidean normed vector space, the complexity of $\mathcal{O}(\tfrac{\sqrt{n}}{\epsilon}\log(1/\epsilon))$ has been shown in \cite{luo2021near}. Later, Yazdandoost Hamedani and Jalilzadeh \cite{yazdandoost2023stochastic} developed stochastic variance-reduced accelerated primal-dual (SVR-APD) algorithm equipped with Bregman distance and demonstrated the complexity of $\mathcal{O}(\sqrt{n}/\epsilon)$ matching the lower-bound obtained in \cite{han2024lower}.

\noindent\textbf{VI problems:} 
Several notable papers explore the variance reduction techniques to address monotone VIs with finite-sum structure \cite{pichugin2024optimal,song2023cyclic,Alacaoglu2022Stochastic}. 
In particular, authors in \cite{song2023cyclic} proposed a cyclic coordinate dual averaging with extrapolation (CODER) method for solving VIs of the form \eqref{VI def}. Assuming a coordinate-friendly structure and monotone operator, their method attains a complexity of $\cO(\sqrt{m}/\epsilon)$ where $m$ is the number of coordinate blocks. Furthermore, they proposed a variance-reduced variant of their method (VR-CODER) by combining an SVRG-type update leading to a three-loop algorithm. This method can achieve a complexity of $\mathcal{O}(\tfrac{\max\{\sqrt{n},\sqrt{m}\}}{\epsilon})$ to find an $\epsilon$-gap in monotone setting.
Considering a non-Euclidean setting, a variance-reduced extra-gradient method with the Bregman distance function is proposed in \cite{Alacaoglu2022Stochastic} for the class of monotone VIs. The proposed algorithm has a double-loop structure and using a retraction step complexity of $\cO(\sqrt{n}/\epsilon)$ to find an $\epsilon$-gap is achieved. Furthermore, using the same technique variance-reduced version of different algorithms including forward-backward-forward and forward-reflected-backward methods in Euclidean setup is studied -- see Table \ref{tab:2} for further comparisons of existing methods.

Exploring beyond the realm of monotone VIs, different classes of non-monotone VIs, including two-sided PL condition, pseudo monotonicity,  negatively comonotonicity, cohypomonotonicity, and weak minty VI (MVI), are studied in the literature \cite{bauschke2021generalized,iusem2017extragradient,yang2020global,pethick2023escaping,pethick2023solving,pethick2024stable}. Among these settings, the weak MVI assumption appears to be the most general existing assumption. 
With few exceptions \cite{cai2022accelerated,pethick2023escaping,alacaoglu2024extending,pethick2023solving,Alacaoglu2022Stochastic}, much of the existing research primarily investigates specific instances of \eqref{VI def}, such as when $h=0$ and $X=\mathbb{R}^n$--as evidenced in \cite{diakonikolas2021efficient,bohm2022solving,choudhury2024single}. Among these exceptions, \cite{cai2022accelerated,pethick2023escaping,alacaoglu2024extending} have attained a complexity of $\mathcal{O}(1/\epsilon^2)$ in the deterministic scenario, whereas solutions for \eqref{VI def} in a stochastic setting remain relatively scarce. Notably, \cite{pethick2023solving} introduces a bias-corrected stochastic extragradient (BC-SEG+) algorithm, guaranteeing convergence with a complexity of $\mathcal{O}(1/\epsilon^4)$. In a concurrent work to ours, Alacaoglu et al. in \cite{alacaoglu2024extending} introduce multi-loop inexact variants of Halpern and Krasnoselski-Mann (KM) iterations, achieving complexities of $\mathcal{O}(\log(1/\epsilon)/\epsilon^2)$ and $\mathcal{O}(\log(1/\epsilon)/\epsilon^4)$ for deterministic and stochastic settings, respectively -- see Table \ref{tab:nonmontone} for comparison of existing methods. 

To the best of our knowledge, no existing method provides a unified approach that can effectively handle Bregman distance functions for both monotone and non-monotone variational inequalities.

\begin{table}[!h]
\centering
\caption{Compression of algorithms for monotone VI problems}
{\begin{tabular}{|c|c|c|c|}
\hline
Setting                 & Ref & Bregman  & Complexity \\ \hline
Deterministic                    &     \begin{tabular}[c]{@{}c@{}} FBF \cite{tseng2000modified}\\FoRB \cite{malitsky2020forward} \\Prox-method \cite{nemirovski2004prox} \\ APD \cite{hamedani2021primal}\end{tabular} &     \begin{tabular}[c]{@{}c@{}} \ding{55} \\ \ding{55} \\ \checkmark \\ \checkmark  \end{tabular}       &     $\mathcal{O}(n/\epsilon)$  \\ \hline
Stochastic                      &  \begin{tabular}[c]{@{}l@{}} SA,SAA \cite{nemirovski2009robust}\\ Mirror-Prox \cite{juditsky2011solving} \end{tabular}   &    \checkmark         &     $\mathcal{O}(1/\epsilon^2)$   \\ \hline
\multirow{2}{*}{Finite-sum} &   \begin{tabular}[c]{@{}c@{}} SVRG \cite{yazdandoost2023stochastic} \\ Extra gradient \cite{Alacaoglu2022Stochastic} \\ VR-MP \cite{Alacaoglu2022Stochastic}\\ VR-CODER  \cite{song2023cyclic} \end{tabular}  &    \begin{tabular}[c]{@{}l@{}} \checkmark \\ \ding{55} \\ \checkmark \\ \ding{55} \end{tabular}              &    \multirow{2}{*}{ $\mathcal{O}(n+\frac{\sqrt{n}}{{\epsilon}})$}  \\ \cline{2-4} 
     &   This paper  &   \checkmark        &  $\mathcal{O}(n+\frac{\sqrt{n}}{{\epsilon}})$    \\ \hline
\end{tabular}}
\label{tab:2}
\end{table}

\begin{table}[!h]
\centering
\caption{Compression of algorithms for non-monotone VI problems under weak MVI assumption }
{\begin{tabular}{|c|c|c|c|}
\hline
Setting                 & Ref               & Bregman                  & Complexity      \\ \hline
Deterministic                    &  \begin{tabular}[c]{@{}c@{}}   OG\cite{cai2022accelerated}
\\AdaptiveEG+ \cite{pethick2023escaping} \\inexact-KM\cite{alacaoglu2024extending}  
\end{tabular}                     &     \begin{tabular}[c]{@{}c@{}}   \ding{55}\\ \ding{55} \\ \ding{55} 
\end{tabular}                                   &       \begin{tabular}[c]{@{}c@{}} $\mathcal{O}(n/\epsilon^2)$ \\ $\mathcal{O}(n/\epsilon^2)$ \\ $\mathcal{O}(n\log(1/\epsilon)/\epsilon^2)$ 
\end{tabular}                   \\ \hline
Stochastic                      &   \begin{tabular}[c]{@{}c@{}}  BC-SEG+ \cite{pethick2023solving}\\
 RAPP\cite{pethick2024stable} \\ Inexact-KM\cite{alacaoglu2024extending}   \end{tabular}                &    \begin{tabular}[c]{@{}c@{}}   \ding{55}\\ \ding{55} \\ \ding{55} 
\end{tabular}                  &    \begin{tabular}[c]{@{}c@{}}  $\mathcal{O}(1/\epsilon^4)$  \\ $\mathcal{O}(1/\epsilon^4)$  \\  $\mathcal{O}(\log(1/\epsilon)/\epsilon^4)$  
\end{tabular}                \\ \hline
Finite-sum & This Paper    &          \checkmark (Lip.)                            &                         $\mathcal{O}(n+\frac{1}{\epsilon^2})$      \\ \hline
\end{tabular}}
\label{tab:nonmontone}
\end{table}

\subsection{Contribution}
Motivated by the absence of a unified method capable of adapting a Bregman distance function to solve both monotone and non-monotone VIs with a finite-sum structure, we develop a \eyh{unified stochastic Variance-Reduced Forward Reflected Moving Average Backward method (VR-FoRMAB) algorithm. To contend with the challenge of using a Bregman distance function with a variance reduction, we proposed a novel stochastic operator estimation as well as a new momentum term based on a nested averaging technique. In the monotone setting, we demonstrate that our proposed method can achieve the optimal complexity of $\cO(n+\frac{\sqrt{n}}{\epsilon})$. 
Furthermore, we analyze our proposed algorithm in a non-monotone setting under a weak MVI assumption. 
While existing deterministic algorithms exhibit a complexity that scales at $n$ with the number of operator components (i.e., achieving a complexity of $\mathcal{O}(n/{\epsilon^2})$) their stochastic counterparts typically entail a complexity of $\mathcal{O}(1/\epsilon^4)$. 
Bridging this gap, we show that under weak MVI assumption and Lipschitz continuity of the Bregman distance generating function, our proposed method can achieve a complexity of $\cO(n+\frac{1}{\epsilon^2})$ for the first time, marking a significant improvement over existing methods. More specifically, considering the effect of the Lipschitz constant on the bound, our complexity result is $\cO(n+\frac{L^2}{\epsilon^2})$ where $L$ denotes the mean-square Lipschitz constant (see Assumption \ref{lip def}) while the complexity of deterministic methods is $\cO(\frac{nL_F^2}{\epsilon^2})$\footnote{$L_F$ is the Lipschitz constant of the operator $F$, and it can be as large as $\sum_{i=1}^n L_i$ where $L_i$ denotes the Lipschitz constant of an individual operator $F_i$ for each $i$. Note that if $L_i$ values are of the same order, then $L_F$ and $L$ are both $\cO(n)$. }. }


\subsection{Application}
From the application perspective, VIs manifest across diverse domains, spanning machine learning, signal processing, image processing, and finance. In particular, problems arising in machine learning include generative adversarial networks \cite{goodfellow2014generative}, reinforcement learning \cite{daskalakis2020independent}, and distributionally robust learning \cite{namkoong2016stochastic}. 
Below, we briefly outline some intriguing examples.\\
\textbf{Distributionally robust optimization (DRO):}
Define $\ell_i(u)=\ell(u,\xi_i)$, where $\ell:\mathcal X\times \Omega\to\mathbb R$ is a loss function possibly nonconvex with be a probability
space $(\Omega, {\cal F}, \mathbb P)$ where $\Omega=\{\xi_1,\hdots,\xi_n\}$. DRO examines the worst-case performance under uncertainty to determine solutions with a particular confidence level \cite{namkoong2016stochastic}. This problem can be stated in the following way:
\begin{align}\label{DRO}
    \min_{u \in \mathcal X}\max_{y\in  \cal P}\quad \mathbb E_{\xi \sim \mathbb P}[\ell(u,\xi_i)] = \sum_{i=1}^n y_i \ell_i(u),
\end{align}
where $\cal P$, represents the uncertainty set, for instance $\cal{P}$ $ = \{ y \in \Delta_n: V(y,\tfrac{1}{n}\mathbf 1_n) \leq \rho \}$ is an uncertainty set considered in different papers such as \cite{namkoong2016stochastic}, and $V (Q,P)$ denotes the divergence measure between two sets of probability measures $Q$ and $P$ and $\Delta_n \triangleq \{ y=[y_i]_{i=1}^n \in \mathbb{R}_+^n | \sum_{i=1}^n y_i =1  \} $ represents the simplex-set. By assuming $V(y,\tfrac{1}{n}\mathbf 1_n) = \sum_{i=1}^n V_i(y_i,\tfrac{1}{n}\mathbf 1_n) $. The following formulation can be obtained by relaxing the constraint on divergence in equation \eqref{DRO}
\begin{align}\label{DRO relaxation}
     \min_{\substack{u \in \mathcal{X} \\ \lambda \geq 0}}\max_{\substack{y = [y_i]_{i=1}^n \\ y \in \Delta_n } }  \sum_{i=1}^n\big(y_i \ell_i(u) - \tfrac{\lambda}{n}(V_i(y_i,\tfrac{1}{n}\mathbf 1_n)-\tfrac{\rho}{n})\big).
\end{align}
Let $x = [u^T \lambda]^T$, $L_i(x,y) = ny_i\ell_i(x)-\lambda(\frac{1}{2}V_i(y_i,\tfrac{1}{n}\mathbf 1_n)-\tfrac{\rho}{n})$, $h_1(x) = \mathbb I_{X \times \mathbb R_{+}}(u,\lambda)$ and $h_2(y) = \mathbb I_{\Delta_n}(y) $, \zr{where $\mathbb{I}_X(\cdot)$ denotes the indicator function of set $X$}, then the previous equation will be a special case of SP problem which can be formulated as a VI by setting $F(z) = F(x,y) =  \begin{bmatrix} \nabla_x L(x,y)\\ -\nabla_y L(x,y) \end{bmatrix}$ and $h(z) = h_1(x)+h_2(y) $. 
\\
\textbf{Von Neumann’s Ratio Game:} 
Consider a two-player game where the payoff function is of the form ${\langle x,Ry \rangle}/{\langle x,Sy \rangle}$, where $x,y$ are mixed-strategy vectors and $R\in\reals^{n\times m}$, $S\in\reals_+^{n\times n}$ are matrices. Such a payoff function arises in stochastic games, economic models of an expanding economy, and some nonzero-sum game formulations \cite{neumann1945model}. The goal is then to solve the following minimax problem
\begin{align}\label{matrix game def}
     \min_{x \in X} \max_{y\in  \Delta_m} \frac{\langle x,Ry \rangle}{\langle x,Sy \rangle},
\end{align}
where $X\subseteq \reals^n$ is a convex and compact set, and we assume that 
$\langle x,Sy \rangle > 0 $, for all $(x,y)\in X\times \Delta_m$. Note that problem \eqref{matrix game def} can be formulated as \eqref{VI def} which has a solution \cite{neumann1945model}.
Moreover, this problem has been shown to satisfy the weak MVI condition \cite{daskalakis2020independent}.

\subsection{Organization of the paper}
In the next section, we begin by establishing the essential notations, lemmas, and key definitions. Then, in Section \ref{Sec:Prop} we introduce our proposed algorithm VR-FoRMAB and provide the convergence analysis proving the main results in Section \ref{sec:convergence analysis}. In particular, we discuss how VR-FoRMAB can be used to solve monotone and non-monotone VI problems. Later, in section \ref{sec:experiments} we implement our proposed method to solve instances of DRO and a two-player zero-sum game and compare them with state-of-the-art methods.

\section{Preliminaries}
In this section, we provide some fundamental definitions and state the lemmas that we need for the convergence analysis. Throughout the paper, $\|\cdot\|_2$ denotes the Euclidean norm. 
\begin{definition}\label{bregman def}
    Let $\psi_{\mathcal X}: \mathcal{X} \xrightarrow{} \mathbb{R}$ be a continuously differentiable function on $\textbf{int}(\textbf{dom } h)$. In addition, $\psi_\mathcal{X}$ is strongly convex with respect to $\|.\|_\mathcal{X}$. The Bregman distance function corresponding to the distance-generating function $\psi_{\mathcal X}$ is defined as $\mathbf{D}_X (x,\bar x) \triangleq \psi_{\mathcal X}(x)-\psi_{\mathcal X}(\bar x)-\langle \nabla\psi_{\mathcal X}(\bar x),x-\bar x\rangle$ for all $x \in X$ and $\bar x \in X^\circ \triangleq X \cap \textbf{int}(\textbf{dom } h)$. Moreover we define the Bregman diamateres $B_X\triangleq \sup_{x\in \tilde X_R}\bD_X(x,x_0)$.   
\end{definition}

\begin{definition}\label{def D_k}
    For a given sequence $\{x_j\}_{j\geq 0}\subset \cX$, an index $k\in\{1,\hdots,n\}$, and $q\in\mathbb{Z}_+$, let $D_X^k(x) \triangleq \frac{1}{q}\sum_{j=(i_k-1)q+1}^{i_kq} \mathbf{D}_X(x,x_j)$ for any $x\in\cX$ where $i_k\triangleq \lfloor k/q \rfloor$. 
\end{definition}

\begin{lemma}\label{lemma 5 similar}
    Let $(\mathcal{U}, \|\cdot\|_\mathcal{U})$ be a finite-dimensional normed vector space with the dual space $(\mathcal{U}^*, \|\cdot\|_{\mathcal{U}^*})$, $f: \mathcal{U} \to \mathbb{R} \cup \{+\infty\}$ be a closed convex function, ${U} \subset \mathcal{U}$ is a closed convex set, and $\phi: \mathcal{U} \to \mathbb{R}$ be a distance-generating function which is continuously differentiable on an open set containing $\textbf{dom }f$ and is $1$-strongly convex with respect to $\|\cdot\|_\mathcal{U}$, and $\mathbf D_U: U \times (U \cap \textbf{dom }f) \to \mathbb{R}$ be a Bregman distance function associated with $\phi$. Then, the following result holds:\\    
    \text{(a)} For all $x \in U$ and $y, z \in U \cap \textbf{dom }f$, $\langle \nabla \phi(z)-\nabla \phi(\za{y}) , x-z \rangle = \mathbf D_U(x, y) -\mathbf D_U(x, z) - \mathbf D_U(z, y)$. \\
    \text{(b)} Given $\bar x \in U \cap \textbf{dom }f$, $s \in \mathcal{U}$, and $t > 0$, let $x^+ = \arg\min_{x \in U} f(x) - \langle s, x \rangle + t \mathbf D_U(x, \bar{x})$. Then, for all $x \in U$, the following inequality holds:
    \begin{align}\label{lemma 5 part a}
        f(x) \geq f(x^+)+ \langle s, x-x^+ \rangle +t \langle \nabla \phi(\bar x)-\nabla \phi(x^+), x - x^+\rangle.
    \end{align}
    \text{(c)} Given the update of $x^+$ in (a), for all $x \in  U$ the following inequality holds:
    \begin{align}\label{lemma 5 part c}
        f(x^+)-f(x)+ \langle s, x-\bar x \rangle \leq  t \left(\mathbf D_U(x, \bar x)- \mathbf D_U(x, x^+)\right) +\frac{1}{2t}\|s\|_\mathcal{U^*}^2. 
    \end{align}
\text{(d)} Assuming $\phi(\cdot)$ is a closed function, then $\nabla \phi^*(\nabla \phi(x)) = x$, for all $x \in \textbf{dom } \nabla \phi \subset \mathcal{U}$; and $\nabla \phi(\nabla \phi^*(y)) = y$, for all $y \in \textbf{dom } \nabla \phi^* \subset \mathcal U^*$.

\end{lemma}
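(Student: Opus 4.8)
The plan is to treat parts (a)--(d) sequentially, since the later parts build on the earlier ones. For part (a), I would start from the first-order optimality condition for the strongly convex problem defining $x^+$: since $x^+$ minimizes $f(x) - \langle s, x\rangle + t\mathbf{D}_U(x,\bar x)$ over the convex set $U$, there exists a subgradient $g \in \partial f(x^+)$ such that $\langle g - s + t(\nabla\phi(x^+) - \nabla\phi(\bar x)), x - x^+\rangle \geq 0$ for all $x \in U$. Combining this with the subgradient inequality $f(x) \geq f(x^+) + \langle g, x - x^+\rangle$ and rearranging yields exactly \eqref{lemma 5 part a}. (One should be slightly careful about whether $f$ is the indicator-plus-smooth split or a general closed convex function, but the variational-inequality form of optimality handles both uniformly.)

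Part (b) is the classical ``three-point identity'' for Bregman distances: I would simply expand all three $\mathbf{D}_U$ terms using Definition \ref{bregman def}, i.e. $\mathbf{D}_U(x,y) = \phi(x) - \phi(y) - \langle\nabla\phi(y), x-y\rangle$, and observe that the $\phi(x)$, $\phi(y)$, $\phi(z)$ terms cancel pairwise, leaving precisely $\langle \nabla\phi(z) - \nabla\phi(y), x - z\rangle$ after collecting the inner-product terms. This is a routine algebraic verification with no obstacle.

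For part (c), I would combine (a) and (b). Apply \eqref{lemma 5 part a} with the roles set so that the term $t\langle\nabla\phi(\bar x) - \nabla\phi(x^+), x - x^+\rangle$ appears, then rewrite that term via the identity in (b) (with $y = \bar x$, $z = x^+$) as $t(\mathbf{D}_U(x,\bar x) - \mathbf{D}_U(x,x^+) - \mathbf{D}_U(x^+,\bar x))$. This produces $f(x^+) - f(x) + \langle s, x - x^+\rangle \leq t(\mathbf{D}_U(x,\bar x) - \mathbf{D}_U(x,x^+)) - t\mathbf{D}_U(x^+,\bar x)$. To reach the stated form with $\langle s, x - \bar x\rangle$ on the left, I write $\langle s, x - \bar x\rangle = \langle s, x - x^+\rangle + \langle s, x^+ - \bar x\rangle$ and then bound $\langle s, x^+ - \bar x\rangle$ using Young's inequality against the $l$-strong convexity of $\phi$: $\langle s, x^+ - \bar x\rangle \leq \frac{1}{2t}\|s\|_{\mathcal{U}^*}^2 + \frac{t}{2}\|x^+ - \bar x\|_\mathcal{U}^2 \leq \frac{1}{2t}\|s\|_{\mathcal{U}^*}^2 + \frac{t}{l}\cdot\frac{l}{2}\|x^+-\bar x\|_\mathcal{U}^2$, and absorb the quadratic term into $-t\mathbf{D}_U(x^+,\bar x) \leq -\frac{tl}{2}\|x^+ - \bar x\|_\mathcal{U}^2$. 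The main subtlety — and the step I expect to need the most care — is matching the constants so that the quadratic terms cancel (this is why one wants strong convexity with modulus $l$ and the $\tfrac{1}{2t}$ coefficient to line up); if the paper's normalization absorbs $l$ into $\|\cdot\|_\mathcal{U}$ this is immediate, otherwise a factor of $l$ must be tracked.

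Finally, part (d) is the standard fact that for a closed (lower-semicontinuous) convex function $\phi$ that is essentially smooth and essentially strictly convex (here guaranteed by strong convexity plus differentiability), the gradient $\nabla\phi$ is a bijection from $\mathbf{int}(\dom\phi)$ onto $\mathbf{int}(\dom\phi^*)$ with inverse $\nabla\phi^*$; equivalently, $y = \nabla\phi(x) \iff x = \nabla\phi^*(y)$, which follows from the Fenchel–Young equality case and conjugate duality ($\phi^{**} = \phi$ for closed convex $\phi$). I would cite this from convex analysis (e.g. Rockafellar) rather than reprove it. Overall, the only genuinely delicate point in the whole lemma is the constant bookkeeping in part (c); everything else is a direct application of optimality conditions and the definition of the Bregman distance.
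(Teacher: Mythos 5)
Your proposal is correct and is essentially the argument the paper relies on: the paper does not prove this lemma itself but simply cites \cite{tseng2008accelerated} and \cite{banerjee2005clustering}, and your optimality-condition/three-point-identity/Young's-inequality/conjugate-duality steps are exactly the standard proofs in those references. Your caveat on part (c) is the right one to flag: with the $\tfrac{1}{2t}\|s\|_{\mathcal{U}^*}^2$ term as stated the cancellation requires the strong-convexity modulus to be normalized to $l=1$ (otherwise one gets $\tfrac{1}{2tl}\|s\|_{\mathcal{U}^*}^2$), which is the normalization the paper implicitly uses, e.g.\ in \eqref{inner pro delta and x_k+1}.
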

\blue{\begin{proof}
Part $(a)$ follows from the definition of the Bregman distance function. To prove part $(b)$, one can apply \cite[Property 1]{tseng2008accelerated} combined with part $(a)$. Part $(c)$ follows by adding $\fprod{s,\bar{x}-x}$ to both sides of \eqref{lemma 5 part a} and using Young's inequality as follows:
\begin{align*}
f(x)+\fprod{s,\bar{x}-x} &\geq f(x^+)+ \langle s, \bar{x}-x^+ \rangle +t \langle \nabla \phi(\bar x)-\nabla \phi(x^+), x - x^+\rangle\\
&\geq f(x^+)-\frac{t}{2}\|\bar{x}-x^+\|^2_\cU-\frac{1}{2t}\|s\|^2_{\cU^*}+t \langle \nabla \phi(\bar x)-\nabla \phi(x^+), x - x^+\rangle\\
&\geq f(x^+)-\frac{1}{2t}\|s\|^2_{\cU^*}-t\left(\mathbf D_U(x, \bar x)- \mathbf D_U(x, x^+)\right),
\end{align*}
where in the last inequality, we used the 1-strong convexity of the Bregman distance and part $(a)$. 
Finally, part $(d)$ follows from \cite[Theorem 26.5]{rockafellar1997convex}.
\end{proof}}

\eyh{
\begin{lemma}\label{lem:aux}
Let $\{x_k\}_{k\geq 0}\subseteq \za{U}$ be a sequence and define $\tilde x_k\triangleq \frac{1}{q}\sum_{j=(i_k-1)q+1}^{i_kq}x_j$ for any $k\geq 0$. Then, for any $k\geq 0$ we have that
\begin{subequations}
\begin{align}
\label{bound for u tilde}
& \ey{\tfrac{1}{2}}\norm{x_{k+1}-\tilde x_k}_\cU^2\leq D^k_\za{U}(x_{k+1}),\\
\label{compare D^k and D}
&\sum_{k=0}^{K-1}D^k_\za{U}(x)\leq \sum_{k=0}^{K-1}\bD_\za{U}(x,x_k),\quad \forall x\in \za{U}.
\end{align}
\end{subequations}
\end{lemma}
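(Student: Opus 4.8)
The plan is to establish each inequality separately, both of which follow from convexity of the Bregman distance in its first argument together with straightforward manipulation of the definitions.

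For \eqref{bound for u tilde}: recall $\tilde x_k = \frac1q\sum_{j=(i_k-1)q+1}^{i_kq} x_j$ and $D^k_U(x_{k+1}) = \frac1q\sum_{j=(i_k-1)q+1}^{i_kq}\bD_U(x_{k+1},x_j)$. Since $\psi$ (here $\phi$) is $l$-strongly convex with $l\geq 1$ — or, more simply, since the Bregman distance dominates the squared norm up to the strong-convexity constant — we have $\bD_U(x_{k+1},x_j)\geq \tfrac12\|x_{k+1}-x_j\|_U^2$; but actually the cleaner route is to use that $x\mapsto \bD_U(x,\bar x)$ is convex, hence by Jensen applied to the average over $j$,
\[
D^k_U(x_{k+1}) = \frac1q\sum_{j}\bD_U(x_{k+1},x_j) \geq \bD_U\Big(x_{k+1},\ \tfrac1q\textstyle\sum_j x_j\Big)?
\]
That is the wrong direction in the second argument; instead I would exploit convexity in the \emph{first} argument is not needed either. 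The right approach: expand $\bD_U(x_{k+1},x_j)=\psi(x_{k+1})-\psi(x_j)-\langle\nabla\psi(x_j),x_{k+1}-x_j\rangle$ and average over $j$; the $\psi(x_{k+1})$ term is unchanged, and by strong convexity each term satisfies $\bD_U(x_{k+1},x_j)\geq\tfrac12\|x_{k+1}-x_j\|_U^2$ when $l\geq1$. Then $\frac1q\sum_j\|x_{k+1}-x_j\|^2\geq\|x_{k+1}-\frac1q\sum_j x_j\|^2=\|x_{k+1}-\tilde x_k\|^2$ by convexity of $\|\cdot\|^2$ (Jensen). Combining gives \eqref{bound for u tilde}. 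If the paper's normalization has $l$-strong convexity with general $l$, the constant is absorbed accordingly; I expect $l=1$ is assumed in the relevant instantiation.

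For \eqref{compare D^k and D}: fix $x\in U$. By definition, $D^k_U(x)=\frac1q\sum_{j=(i_k-1)q+1}^{i_kq}\bD_U(x,x_j)$, so $\sum_{k=0}^{K-1}D^k_U(x)$ is a weighted sum of the terms $\bD_U(x,x_j)$. The key combinatorial observation is that as $k$ ranges over $\{0,\dots,K-1\}$, the index $i_k=\lfloor k/q\rfloor$ takes each value at most $q$ times, and for a fixed value of $i_k$ the inner sum runs over a block of $q$ consecutive indices $j$; thus each $\bD_U(x,x_j)$ appears with total coefficient at most $q\cdot\frac1q=1$, and moreover only for $j$ in the range covered, which is a subset of $\{0,\dots,K-1\}$ (after adjusting boundary blocks). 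Hence $\sum_{k=0}^{K-1}D^k_U(x)\leq\sum_{k=0}^{K-1}\bD_U(x,x_k)$. One must be slightly careful with the indexing at block boundaries — whether the blocks are $[(i_k-1)q+1,\,i_kq]$ exactly tile $\{1,\dots,K\}$ — but since $\bD_U(x,\cdot)\geq0$ (nonnegativity of the Bregman distance from strong convexity) any over- or under-counting at the edges only helps the inequality.

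The main obstacle is purely bookkeeping: getting the block-index ranges in Definition \ref{def D_k} exactly right so that the counting argument for \eqref{compare D^k and D} is airtight, and confirming the strong-convexity constant in \eqref{bound for u tilde} matches the normalization used downstream. Neither step involves a substantive analytic difficulty; the content is Jensen's inequality plus a counting argument, both leaning on nonnegativity of $\bD_U$.
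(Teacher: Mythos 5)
Your proposal is correct and takes essentially the same route as the paper: \eqref{bound for u tilde} via the triangle/Jensen inequality on the squared norm of the average combined with the strong-convexity lower bound linking $\bD_U$ to $\|\cdot\|_\cU^2$, and \eqref{compare D^k and D} via the block-counting observation that each $\bD_U(x,x_j)$ receives total weight at most $q\cdot\tfrac1q=1$ over indices $j\leq i_{K-1}q\leq K-1$, with nonnegativity of the Bregman distance absorbing the boundary blocks. The normalization caveat you flag for \eqref{bound for u tilde} (the modulus of strong convexity) is glossed over in the paper as well, which simply asserts the bound ``follows from the definition of $D_U^k$.''
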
}
\begin{proof}

Using the definition of $\tilde x_k$ and the fact that for any nonnegative sequence $\{a_i\}_{i=1}^m$, $(\sum_{i=1}^m a_i)^2\leq m\sum_{i=1}^m a_i^2$, the following can be obtained:
\begin{align*}
      \norm{x_{k+1}-\tilde x_k}_\cU^2 &= \Big\|{x_{k+1}- \frac{1}{q}\sum_{j=(i_k-1)q+1}^{i_kq}x_j}\Big\|_\cU^2 \\
     & = \Big\|{\frac{1}{q}\sum_{j=(i_k-1)q+1}^{i_kq}(x_{k+1}- x_j)}\Big\|_\cU^2\\
     &\leq \frac{1}{q}\sum_{j=(i_k-1)q+1}^{i_kq}\norm{x_{k+1}- x_j}_\cU^2.
\end{align*}
The result in \eqref{bound for u tilde} follows from the definition of $D_U^k(x)$. 

For the second inequality, by using the definition of $D^k_U(x)$, for any $x\in \za{U}$ then we have,
\begin{align*}
    \sum_{k=0}^{K-1}D^k_\za{U}(x) &=  \sum_{k=0}^{K-1} \frac{1}{q}\sum_{j=\eyh{[(i_k-1)q+1]_+}}^{i_kq}D_\za{U}(x,x_j) \\
    &= \eyh{\bD_\za{U}(x,x_0)+\sum_{j=1}^{q}\bD_\za{U}(x,x_j)+\dots+\sum_{j=(i_{K-1}-1)q+1}^{i_{K-1}q}\bD_\za{U}(x,x_j)}\\
    & \eyh{= \sum_{j=0}^{i_{K-1}q} \bD_\za{U}(x,x_j)}\eyh{\leq \sum_{k=0}^{K-1} \bD_\za{U}(x,x_k).}
\end{align*}
\end{proof}

\section{\zr{Algorithmic Framework}}\label{Sec:Prop}
In this section, we propose our novel unified variance-reduced algorithm adaptable to both monotone and non-monotone scenarios. After introducing the algorithm, we divide the convergence analysis into two main sections by laying out the underlying assumptions necessary to achieve a convergence rate guarantee.  
Our goal is to develop an efficient algorithm for solving \eqref{VI def} in which we assume that sample operators are Lipschitz continuous on average. In particular, we consider the following standard assumption.
\begin{assumption}\label{lip def}
Let $F_S(x)\triangleq \frac{1}{S}\sum_{i\in\cS}F_i(x)$ for some $\cS\subset\{1,\hdots,n\}$. 
    There exists $L>0$ such that $\mathbb E[\norm{F_S(x)-F_S(\bar x)}_{\cX^*}^2]\leq L^2 \norm{x-\bar x}_\cX^2$ for any $x,\bar x\in\cX$.
\end{assumption}
Moreover, $h$ is possibly nonsmooth whose Bregman proximal operator, i.e., $\argmin_{x \in X}\break\{ h(x)+ \langle v, {x} \rangle + \tfrac{1}{\sigma} \mathbf{D}_X(x,\bar x)\}$ for some $\bar x,v\in\cX$ and $\sigma>0$, can be computed efficiently. \blue{The primary motivation for introducing Bregman distances in our algorithm design is the ability to better capture the geometry of the feasible set, especially in structured, non-Euclidean settings. For example, the effectiveness of using the negative entropy Bregman distance for projection onto the simplex set has been studied in \cite{krichene2015efficient,mao2023bregman,shi2017bregman}.} 

Variance reduction techniques stand out as a cornerstone in revolutionizing stochastic methods and were first introduced to address large-scale finite-sum minimization problems. Several methods including SAG, SVRG, and SPIDER \cite{schmidt2017minimizing,johnson2013accelerating,fang2018spider} among others were introduced to reduce the per-iteration cost. \eyh{Unfortunately, the extension of these methods to a more general setting of using the Bregman distance function or VIs is not trivial. Among these techniques, SPIDER method generates a progressively improving sequence of gradient estimates by sampling the gradients and periodically computing the full gradient when $\text{mod}(k,q)=0$ for some integer $q>0$. More precisely, if we let $v_{k-1}$ be the estimator of the gradient map $\nabla^{k-1}$ at iteration $k-1$, then the next estimator can be constructed using sample gradients $\nabla^k_S$ and $\nabla^{k-1}_S$ as follows: $v_k=v_{k-1}+\nabla_S^k-\nabla_S^{k-1}$. Note that if $\mE[v_{k-1}\mid x_{k-1}]=\delta^{k-1}$, then $\mE[v_k\mid x_k]=\delta^k$. Inspired by this approach, we introduce a new recursive update. In particular, to accommodate the Bregman distance function we introduce a new retraction step with parameter $\gamma_k\in [0,1]$ to find $\hat x_k$ such that $\grad\psi_\cX(\hat x_k)=(1-\gamma_k)\grad\psi_\cX(x_k)+\gamma_k s_k$ where $s_k$ is a moving average of iterates in the mirror space (see line 6 of Algorithm \ref{alg: VI-spider+}). Moreover, the next iterate point is found through a Bregman proximal step by moving along a direction that is a convex combination of $F$ at $x_k$ and the moving average of iterates $\tilde x_k$, i.e., $(1-\beta)F(x_k)+\beta F(\tilde x_k)$ plus a novel momentum based on the current and past operators denoted by $r_k\triangleq F_{S}(x_k)-(1-\beta)F_{S}({x}_{k-1})-\beta F_{S}(\tilde x_{k-1})$. Consequently, we update the estimate of this direction denoted by $v_k$ as follows:
\begin{equation}\label{eq:vk-update}
    v_k=v_{k-1}+(1-\beta)(F_S(x_k)-F_S(x_{k-1})),
\end{equation}
for some $\beta\in [0,1]$. Indeed, we can show that if $v_{k-1}$ is an unbiased estimator of $(1-\beta)F(x_{k-1})+\beta F(\tilde x_{k-1})$, then $v_k$ is an unbiased estimator of $(1-\beta)F(x_k)+\beta F(\tilde x_k)$ and the variance of error of this estimation depends on the cumulative distance of consecutive iterates -- see Lemma \ref{lemma: spider+}. 
Furthermore, the direction $v_k$ and momentum term $r_k$ are updated using the full batch every $q$ iteration. The steps of the proposed method are shown in Algorithm \ref{alg: VI-spider+}.}

\begin{algorithm}
\caption{VR-FoRMAB}\label{alg: VI-spider+}
\begin{algorithmic}[1] 
 \STATE \textbf{Input}: $\eyh{x}_0 \in {\mathcal X}$ , $S\in\{1,\hdots,n\}$, $q\in\{1,\hdots,n\}$, $\beta \in [0,1]$, \zr{$\theta_k = 1, \gamma_k \in (0,1)$ and $\sigma_k>0$}
 \FOR{$k = 0,\dots, K-1$}
\STATE $i_k \gets \lfloor k/q \rfloor$\;
\IF{$\text{mod}(k,q) = 0$}
\STATE $\tilde x_k \gets \frac{1}{q}\sum_{j=k-q+1}^{k} x_j$\;
      \STATE       $s_k\gets \frac{1}{q}\sum_{j=k-q+1}^{k}\grad\psi_{\cX}(x_j)$\;
        \STATE     $v_k\gets (1-\beta) F(x_k)+ \beta F(\tilde x_k)$\;
       \STATE      $\za{r}_k \gets \eyh{F(x_k)-(1-\beta)F({x}_{k-1})-\beta F(\tilde x_{k-1})} $\;
\ELSE
 \STATE $\tilde x_k\gets \tilde x_{i_kq}$\;
	 \STATE	  Draw $S$ samples $\cS\subset\{1,\hdots,n\}$ and let $F_S\triangleq \frac{1}{S}\sum_{i\in\cS} F_i$ \;
        \STATE     $v_k \gets v_{k-1}
            +(1-\beta) (F_{S}(x_k)-F_{S}(x_{k-1}))$\; 
        \STATE       $\za{r}_k \gets \eyh{F_{S}(x_k)-(1-\beta)F_{S}({x}_{k-1})-\beta F_{S}(\tilde x_{k-1})} $\;
\ENDIF
  \STATE $\hat x_k\gets \grad \psi_\cX^*((1-\gamma_k)\grad\psi_\cX(x_k) + \gamma_k s_k)$\;
     \STATE     $x_{k+1} \leftarrow \argmin_{x \in X}\{ h(x)+ \langle v_k+\theta_k \za{r}_k, \za{x} \rangle + \tfrac{1}{\sigma_k} \mathbf{D}_X(x,\eyh{\hat x}_k)\} $\;
 \ENDFOR
\end{algorithmic}
\end{algorithm}
Next, we introduce key definitions to streamline notation and establish essential properties of the variance reduction technique outlined in \eqref{eq:vk-update} in the consequent lemmas. These results serve as foundational elements crucial for demonstrating the convergence rate of the proposed algorithm in both monotone and non-monotone settings.

\begin{definition} \label{def param}
    For any $k\geq 0$, let $\delta_k \triangleq v_k -((1-\beta)F(x_k)+\beta F(\tilde x_k))$, $\bar r_k \triangleq F(x_k)-(1-\beta)F({x}_{k-1})-\beta F(\tilde x_{k-1})$, and 
    \blue{\begin{align*}
    r_k\triangleq \begin{cases}F_{S}(x_k)-(1-\beta)F_{S}({x}_{k-1})-\beta F_{S}(\tilde x_{k-1})& if \quad \text{mod}(k, q) \neq 0\\
    \bar r_k& o.w.
    \end{cases}
    \end{align*}} .
\end{definition}

\begin{lemma}\label{bound base on norm 2 delta}
Let $\{x_k\}_{k\geq 0}$ be the sequence generated by Algorithm \ref{alg: VI-spider+}.
We define some auxiliary sequence $\{w_k^x,w_k^r\}\subset \cX$, for $k\geq 0$ as follow: 
\begin{subequations}\label{eq:diff u w}
\begin{align}
   &w_{k+1}^x\leftarrow \argmin_{x \in X} \{ -\langle \za{\delta_k},x \rangle + \tfrac{1}{\eta_{k}} \mathbf{D}_X(x,w_{k}^x)\}\label{dif u} \\ 
    &w_{k+1}^\za{r} \leftarrow \argmin_{x \in X} \{\langle r_k-\bar r_{k},x \rangle + \tfrac{1}{\eta_{k}} \mathbf{D}_X(x,w_{k}^\za{r})\} \label{dif w}.
\end{align}
\end{subequations} \\
Then, for any $k\geq 0$, $\eta_k^1,\eta_k^2>0$, and $x\in X$,
\begin{subequations}
    \begin{align}
        \label{bound a}
        \langle \za{\delta_k} , x-x_{k+1} \rangle &\leq \tfrac{1}{\eta_{k}^1} (\mathbf{D}_X(x,w_{k}^x)-\mathbf{D}_X(x,w_{k+1}^x)) + \ey{\fprod{\delta_k,w_k^x-w_{k+1}^x+x_k-x_{k+1}}} 
        \nonumber\\
        &\quad -\tfrac{1}{\eta_k^1}\bD_X(w_{k+1}^x,\zi{w}_k^x)+\langle \za{\delta_k} , w_{k}^x-x_{k} \rangle,\\
        \label{bound c}
        \eyy{\langle {r}_k-\bar r_{k} , x-x_{k+1} \rangle} &\leq \tfrac{1}{\eta_{k}^2} (\mathbf{D}_X(x,w_k^\za{r})-\mathbf{D}_X(x,w_{k+1}^\za{r}))+\eyy{\eta_{k}^2\|{r}_k-\bar r_{k}\|_{\mathcal{X}^*}^2} + \langle r_k-\bar r_{k} ,w_{k}^r -x_k\rangle\nonumber \\ 
        &\quad  + \eyy{\tfrac{1}{\eta_k^2}\bD_X(x_{k+1},x_k)}. 
    \end{align}
\end{subequations}
\end{lemma}
\begin{proof}
We begin the proof of inequality in \eqref{bound a} by splitting the inner product into $\langle \za{\delta_k} , x-x_{k} \rangle $ and $\langle \za{\delta_k}, x_k-x_{k+1} \rangle$ and providing an upper bound for
\ey{the first inner product}. In particular, using Lemma \ref{lemma 5 similar} parts $(a)$ and $(b)$ for the sequence in \eqref{dif u} with $s =\za{\delta_k}, t = 1/\eta_{k}^1$, and $f \equiv 0$, for any $x \in X$ we conclude that 
\begin{align} \label{eq:lemma 1 on delta}
    \nonumber\langle \za{\delta_k} , x-x_{k} \rangle &= \langle \za{\delta_k}, x-w_{k}^x \rangle + \langle \za{\delta_k}, w_{k}^x-x_{k} \rangle\\
    &\leq \tfrac{1}{\eta_{k}^1} (\mathbf{D}_X(x,w_{k}^x)-\mathbf{D}_X(x,w_{k+1}^x)-\mathbf{D}_X(w_{k+1}^x,w_k^x))+\fprod{\delta_k,\zi{w_{k+1}^x -w_k^x}}\nonumber\\
    &\quad + \langle \za{\delta_k} , w_{k}^x-x_{k} \rangle.
\end{align}
\ey{Therefore the result in \eqref{bound a} follows from \eqref{eq:lemma 1 on delta} by adding $\langle \za{\delta_k} , x_k-x_{k+1} \rangle$ to both sides.} 

The inequality in \eqref{bound c} can be shown by splitting the inner product into $\fprod{r_k-\bar{r}_k,x-x_k}$ and $\fprod{r_k-\bar{r}_k,x_{k+1}-x_k}$ and directly applying the result of Lemma \ref{lemma 5 similar} part $(c)$ on the former term with $s =r_k-\bar{r}_k, t = 1/\eta_{k}^2$, and $f \equiv 0$ while applying Young's inequality in the latter one. 
\end{proof}


\begin{lemma}\label{lemma: spider+} 
Let $\{x_k\}_{k\geq 0}$ be the sequence generated by Algorithm \ref{alg: VI-spider+} and $\{w_k^x,w_k^r\}_{k\geq 0}$ defined as in \eqref{eq:diff u w}. Then, for any $k \geq 0$, $\mE[\delta_k]=0$ and $\mE[\za{r}_k-\bar r_k]=0$. Moreover, for $k\geq 0$
\begin{subequations}
\begin{align}
\label{eq:delta-norm2 for y spider+}
\ey{\mE\left[\fprod{\delta_k,w_k^x-w_{k+1}^x+x_k-x_{k+1}}\right]}
&\leq \tfrac{(1-\beta)q}{\eta_k^4} \mE[\bD_X(w_{k+1}^x,w_k^x)]+\tfrac{(1-\beta)q}{\eta_k^3}\mE[\bD_X(x_{k+1},x_k)]
 \\
&\quad +4(1-\beta)L^2(\eta_k^4+\eta_k^3)\sum_{t=i_kq+1}^k\mE[\bD_X(x_t,x_{t-1})],\nonumber\\
 \label{eq:delta-norm2 for q spider+}
\mE\big[\norm{r_k - \bar r_k}_{\cX^*}^2\big]&\leq \ey{16}(1-\beta)^2L^2 \mE[\bD_X(x_k,x_{k-1})]+\ey{16}\beta^2 L^2 \mE[D^{k-1}_X(x_k)].
\end{align}
\end{subequations}
 
\end{lemma}

\begin{proof}
Using the definitions of $\delta_k$ and $v_k$ in Algorithm \ref{alg: VI-spider+}, for any $k\geq 0$ and $t\in\{i_kq+1,\hdots,(i_k+1)q-1\}$ we have that
\begin{align*}
    v_t =   v_{t-1}+(1-\beta)(F_S(x_t)-F_S(x_{t-1})).
\end{align*}
Adding and subtracting $(1-\beta)F(x_t)+\beta F(\tilde x_t)$ and defining $e_t\triangleq F_S(x_t)-F_S(x_{t-1})-F(x_t)+F(x_{t-1})$ leads to:
\begin{align*}
     \delta_t &=\delta_{t-1}+ (1-\beta) e_t+\beta(F(\tilde x_{t-1})-F(\tilde x_t)).
\end{align*}
Therefore, summing the above relation from $t=i_kq+1$ to $k$ and noting that $\delta_{i_kq}=0$ we obtain
\begin{align}\label{define delta}
    \delta_k&=(1-\beta)\sum_{t=i_kq+1}^k e_t+\beta(F(\tilde x_{i_kq})-F(\tilde x_k))=(1-\beta)\sum_{t=i_kq+1}^k e_t,
\end{align}
where the last equality follows from the fact that $\tilde x_t=\tilde x_{i_kq}$ for any $t\in\{i_kq+1,\hdots,(i_k+1)q-1\}$. Therefore, we immediately conclude that $\mE[\delta_k]=0$ for any $k\geq 0$. 

\ey{Next, using \eqref{define delta} followed by applying Cauchy Schwartz and triangle inequalities we obtain} 
\begin{align}\label{eq:young delta}
&\ey{\fprod{\delta_k,w_k^x-w_{k+1}^x+x_k-x_{k+1}}}\\
&\quad \leq 
\norm{\delta_k}_{\cX^*}\big(\|w_{k+1}^x-w_k^x\|_\cX+\|x_{k+1}-x_k\|_\cX\big) \nonumber\\
&\quad \leq (1-\beta)\sum_{t=i_kq+1}^k \norm{e_t}_{\cX^*}\big(\|w_{k+1}^x-w_k^x\|_\cX+\|x_{k+1}-x_k\|_\cX\big) \nonumber\\
&\quad \leq (1-\beta)\sum_{t=i_kq+1}^k\Big(\tfrac{\eta_k^4+\eta_k^3}{2}\norm{e_t}_{\cX^*}^2 + \tfrac{1}{\eta_k^4}\bD_X(w_{k+1}^x,w_k^x)+\tfrac{1}{\eta_k^3}\bD_X(x_{k+1},x_k)\Big), \nonumber
\end{align}
where the last inequality follows from the application of Young's inequality.
Moreover from the definition of $e_t$ and Assumption \ref{lip def} we have that
\begin{align}\label{bound for norm2 e^y}
\mE[\norm{e_t}_{\cX^*}^2]&=\mE\big[\big\|F_S(x_t)-F(x_t)-F_S(x_{t-1})+F(x_{t-1})\big\|^2_{\cX^*}\big] \\
&\leq 4L^2\mE[\norm{x_t-x_{t-1}}^2_\cX].\nonumber
\end{align}
Therefore, combining \eqref{bound for norm2 e^y} with \eqref{eq:young delta}, using strong convexity of the Bregman distance, and the fact that $k-i_kq\leq q$ for any $k\geq 0$, lead to the desired result in \za{\eqref{eq:delta-norm2 for y spider+}}. 

Next, recall that \blue{for any $k\geq 0$, $\bar r_k = F(x_k)-(1-\beta)F({x}_{k-1})-\beta F(\tilde x_{k-1})$, moreover, $r_k= F_{S}(x_k)-(1-\beta)F_{S}({x}_{k-1})-\beta F_{S}(\tilde x_{k-1})$ if $\text{mod}(k,q)\neq 0$ and $\bar r_k$ otherwise.} It is easy to verify that $\mE[r_k-\bar r_k]=0$ for $k\geq 0$. Define $\tilde e_k\triangleq F_S(x_k)-F_S(\tilde x_{k-1})+F(\tilde x_{k-1})-F(x_k)$, then using the triangle inequality and Assumption \ref{lip def}, one can easily demonstrate that
\begin{align}\label{eq:qk-barqk}
    \mE[\norm{r_k - \bar r_k}_{\cX^*}^2]&\leq
    2(1-\beta)^2\mE[\norm{e_k}_{\cX^*}^2] + 2\beta^2\mE[\norm{\tilde e_k}_{\cX^*}^2]\\
&\leq 8(1-\beta)^2L^2\mE[\norm{x_k-x_{k-1}}^2_\cX]+8\beta^2 L^2 \mE[\norm{x_k-\tilde x_{k-1}}^2_\cX].\nonumber
\end{align}
Then, using strong convexity of Bregman distance function and \eqref{bound for u tilde} in Lemma \ref{lem:aux} we obtain the desired result. 
\end{proof}
In the following lemma, we present a one-step analysis for VR-FoRMAB, which serves as the main component for demonstrating the rate result in section \ref{sec:convergence analysis}.
\begin{lemma}\label{bound for gap}
Let $\{x_k\}_{k\geq 0}$ be the sequence generated by VR-FoRMAB displayed in Algorithm \ref{alg: VI-spider+} initialized
from arbitrary vectors $x_0 \in {\mathcal X}$. Let $\{w_k^x,w_k^\za{r}\}$ be the auxiliary sequence defined in \eqref{dif u}-\eqref{dif w}. Suppose Assumption \ref{lip def} holds and $ \za{r}_k$, $\delta_k$, and $\bar r_k$ are defined in Definition \ref{def param}. For any $x \in X$ and $k\geq 0$ the following result holds:
\begin{align}\label{comb 3 terms spider+}
     &h(x_{k+1}) - h(x) + \langle F(x), x_{k+1}-x \rangle\\
        &\leq  A_k +\theta_k \langle\bar r_k,\za{x-x_{k}}\rangle -\langle \bar r_{k+1} , x-x_{k+1} \rangle   +  \tfrac{1-\gamma_k} {\sigma_k} (\mathbf{D}_X(x,x_k)-\mathbf{D}_X(x,x_{k+1}))\nonumber\\
        &\quad +\big(\eyy{(\tfrac{(1-\beta)q}{\eta_k^3}+\tfrac{1}{\eta_k^2})}+\za{\theta_k}(\alpha_1+\alpha_2)-\tfrac{1-\gamma_k}{\sigma_k}\big)\mathbf{D}_X(x_{k+1},x_k)\nonumber\\
        &\quad 
         +\tfrac{\gamma_k} {\sigma_k  } (D^k_X(x)-\mathbf{D}_X(x,x_{k+1})-D^k_X(x_{k+1}))\nonumber\\
         &\quad +{\theta_k}(1-\beta)^2L^2\big(\tfrac{1}{\alpha_1}+\eyy{16\eta_k^2}\big)\bD_X(x_k,x_{k-1})+{\theta_k}\beta^2L^2 \big(\tfrac{1}{\alpha_2}+\ey{16\eta_k^2}\big)D_X^{k-1}(x_k) \nonumber\\
         &\quad + \eyy{4(1-\beta)L^2(\eta_k^4+{\eta_k^3})} \sum_{t=\eyh{i_kq+1}}^k \bD_X(x_t,x_{t-1}),\nonumber
\end{align}
where $A_k\triangleq \langle\delta_k , x-x_{k+1} \rangle+\theta_k\langle r_k - \bar r_k, x-x_{k+1} \rangle -(\tfrac{(1-\beta)q}{\eta_k^3}+\tfrac{1}{\eta_k^2})\bD_X(x_{k+1},x_k) -4(1-\beta)L^2(\eta_k^4+\eta_k^3)\sum_{t=i_kq+1}^k\bD_X(x_t,x_{t-1}) -\ey{16}\theta_k\eta_{k}^2L^2\big((1-\beta)^2 \bD_X(x_k,x_{k-1})+\beta^2 D^{k-1}_X(x_k)\big)$.

\end{lemma}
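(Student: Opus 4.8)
The plan is to read off \eqref{comb 3 terms spider+} from the optimality condition of the proximal step in line 16 of Algorithm \ref{alg: VI-spider+}, after (i) eliminating the retraction point $\hat x_k$, (ii) replacing the estimator $v_k$ by the exact direction plus its error, and (iii) turning the leftover inner products into Bregman distances by Young's inequality. Applying Lemma \ref{lemma 5 similar}(a) to line 16 with $f=h$, $s=-(v_k+\theta_k r_k)$, $t=1/\sigma_k$, $\bar x=\hat x_k$, and then Lemma \ref{lemma 5 similar}(b) to the $\grad\psi_\cX$-term it produces, one gets for every $x\in X$ that $h(x_{k+1})-h(x)\le\langle v_k+\theta_k r_k,\,x-x_{k+1}\rangle+\tfrac1{\sigma_k}\big(\bD_X(x,\hat x_k)-\bD_X(x,x_{k+1})-\bD_X(x_{k+1},\hat x_k)\big)$.

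The crucial step is to eliminate $\hat x_k$. By lines 6 and 15, $\grad\psi_\cX(\hat x_k)=(1-\gamma_k)\grad\psi_\cX(x_k)+\gamma_k\,\tfrac1q\sum_{j=(i_k-1)q+1}^{i_kq}\grad\psi_\cX(x_j)$, so $\hat x_k$ is the Bregman barycenter of $x_k$ (weight $1-\gamma_k$) and of the $q$ block iterates (total weight $\gamma_k$). The multi-point extension of Lemma \ref{lemma 5 similar}(b), a one-line computation from the definition of $\bD_X$, gives for every $u\in X$ that $(1-\gamma_k)\bD_X(u,x_k)+\gamma_k D_X^k(u)=\bD_X(u,\hat x_k)+(1-\gamma_k)\bD_X(\hat x_k,x_k)+\gamma_k D_X^k(\hat x_k)$. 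Writing this at $u=x$ and at $u=x_{k+1}$ and subtracting, the terms depending only on $\hat x_k$ cancel and one obtains $\bD_X(x,\hat x_k)-\bD_X(x_{k+1},\hat x_k)=(1-\gamma_k)(\bD_X(x,x_k)-\bD_X(x_{k+1},x_k))+\gamma_k(D_X^k(x)-D_X^k(x_{k+1}))$. Substituting this and splitting $\bD_X(x,x_{k+1})=(1-\gamma_k)\bD_X(x,x_{k+1})+\gamma_k\bD_X(x,x_{k+1})$ reproduces precisely the $\tfrac{1-\gamma_k}{\sigma_k}(\bD_X(x,x_k)-\bD_X(x,x_{k+1}))$, the $\tfrac{\gamma_k}{\sigma_k}(D_X^k(x)-\bD_X(x,x_{k+1})-D_X^k(x_{k+1}))$, and the $-\tfrac{1-\gamma_k}{\sigma_k}\bD_X(x_{k+1},x_k)$ contributions of \eqref{comb 3 terms spider+}.

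For the linear term I use Definition \ref{def param}: $v_k=(1-\beta)F(x_k)+\beta F(\tilde x_k)+\delta_k$ while $\bar r_{k+1}=F(x_{k+1})-(1-\beta)F(x_k)-\beta F(\tilde x_k)$, hence $v_k=F(x_{k+1})-\bar r_{k+1}+\delta_k$. Thus $\langle v_k,x-x_{k+1}\rangle=\langle F(x_{k+1}),x-x_{k+1}\rangle+\langle\bar r_{k+1},x_{k+1}-x\rangle+\langle\delta_k,x-x_{k+1}\rangle$, and together with the $\langle F(x),x_{k+1}-x\rangle$ on the left-hand side this leaves the VI residual $\langle F(x_{k+1})-F(x),x-x_{k+1}\rangle$ — which is nonpositive once $F$ is monotone (the regime in which \eqref{comb 3 terms spider+} is applied for the $\cO(\sqrt n/\epsilon)$ rate), and is otherwise carried along to be absorbed by the weak-MVI inequality in the corresponding theorem — plus the terms $\langle\bar r_{k+1},x_{k+1}-x\rangle$ and $\langle\delta_k,x-x_{k+1}\rangle$. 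For the momentum part I split $r_k=\bar r_k+(r_k-\bar r_k)$ and $x-x_{k+1}=(x-x_k)+(x_k-x_{k+1})$, producing the telescoping term $\theta_k\langle\bar r_k,x-x_k\rangle$, the martingale term $\theta_k\langle r_k-\bar r_k,x-x_k\rangle$, and the remainder $\theta_k\langle r_k,x_k-x_{k+1}\rangle$.

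It remains to bound $\theta_k\langle r_k,x_k-x_{k+1}\rangle$ and to assemble $A_k$. From $r_k=(1-\beta)(F_S(x_k)-F_S(x_{k-1}))+\beta(F_S(x_k)-F_S(\tilde x_{k-1}))$ and Assumption \ref{lip def} one has $\norm{r_k}_{\cX^*}\le(1-\beta)L\norm{x_k-x_{k-1}}_\cX+\beta L\norm{x_k-\tilde x_{k-1}}_\cX$; then $\langle r_k,x_k-x_{k+1}\rangle\le\norm{r_k}_{\cX^*}\norm{x_k-x_{k+1}}_\cX$, Young's inequality with free parameters $\alpha_1$ (on the $x_{k-1}$ part) and $\alpha_2$ (on the $\tilde x_{k-1}$ part), strong convexity of $\psi_\cX$, and \eqref{bound for u tilde} of Lemma \ref{lem:aux} give $\theta_k\langle r_k,x_k-x_{k+1}\rangle\le\theta_k\tfrac{(1-\beta)^2L^2}{\alpha_1}\bD_X(x_k,x_{k-1})+\theta_k\tfrac{\beta^2L^2}{\alpha_2}D_X^{k-1}(x_k)+\theta_k(\alpha_1+\alpha_2)\bD_X(x_{k+1},x_k)$. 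The remaining pieces of \eqref{comb 3 terms spider+} — namely $\tfrac1{\eta_k}\bD_X(x_{k+1},x_k)$, $2\eta_k\theta_k(1-\beta)^2L^2\bD_X(x_k,x_{k-1})$, $2\eta_k\theta_k\beta^2L^2D_X^{k-1}(x_k)$, and $2\eta_k(1-\beta)^2\oc^2\uc^2L^2\sum_{t=i_kq+1}^k\bD_X(x_t,x_{t-1})$ — enter merely by adding and subtracting them (each appears with a $+$ in the body and a $-$ inside $A_k$); collecting these, together with $\langle\delta_k,x-x_{k+1}\rangle$ and $\theta_k\langle r_k-\bar r_k,x-x_k\rangle$, into $A_k$ yields \eqref{comb 3 terms spider+}. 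These compensating terms are chosen so that, in the convergence proof, when \eqref{bound a}–\eqref{bound c} of Lemma \ref{bound base on norm 2 delta} together with the variance bounds of Lemma \ref{lemma: spider+} are invoked on the two martingale terms, the resulting $\eta_k\norm{\delta_k}_{\cX^*}^2$, $\tfrac{\eta_k\theta_k}{2}\norm{r_k-\bar r_k}_{\cX^*}^2$, and $\tfrac1{\eta_k}\bD_X(x_{k+1},x_k)$ are exactly cancelled. The step I expect to be the main obstacle is the barycenter manipulation of the second paragraph: one must identify $\hat x_k$ with a Bregman barycenter and use the exact multi-point identity so that every non-telescoping Bregman term cancels and leaves the clean convex combination of $\bD_X(\cdot,x_k)$ and $D_X^k(\cdot)$ — this is precisely what reconciles the Bregman distance with a single-loop SPIDER-type recursion; the rest is careful but mechanical Young-inequality bookkeeping.
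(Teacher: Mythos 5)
Your proposal is correct and follows essentially the same route as the paper's proof: Lemma \ref{lemma 5 similar} applied to the proximal step, elimination of $\hat x_k$ through $\grad\psi_\cX(\hat x_k)=(1-\gamma_k)\grad\psi_\cX(x_k)+\gamma_k s_k$ together with the three-point identity (your Bregman-barycenter identity is just the paper's termwise application of Lemma \ref{lemma 5 similar}(b) packaged differently), the same splitting of $v_k+\theta_k r_k$ into the $\delta_k$, $\bar r_{k+1}$, $\bar r_k$, and $r_k-\bar r_k$ contributions, the same Cauchy--Schwarz/Lipschitz/Young bound on $\theta_k\langle r_k, x_k-x_{k+1}\rangle$, and the same add-and-subtract construction of $A_k$. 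You are also right that passing from $\langle F(x_{k+1}),x_{k+1}-x\rangle$ to $\langle F(x),x_{k+1}-x\rangle$ needs monotonicity, which the paper invokes at exactly this point even though it is not listed among the lemma's stated hypotheses, so your explicit flagging of that step matches (and slightly sharpens) the paper's argument.
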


\begin{proof}
Applying Lemma \ref{lemma 5 similar}  for updating the rule of $x_{k+1}$, implies that for any $ x \in X$, 
    \begin{align*}
        h(x_{k+1}) - h(x) &\leq  \langle v_k+\theta_k r_k, x-x_{k+1} \rangle + \tfrac{1}{\sigma_k} \langle  \nabla \psi_{\mathcal X}(x_{k+1}) - \nabla \psi_{\mathcal X}(\hat x_{k}), x- x_{k+1}\rangle\\
        &=  \langle v_k+\theta_k r_k, x-x_{k+1} \rangle \\
        &\quad + \tfrac{1}{\sigma_k} \langle  \nabla \psi_{\mathcal X}(x_{k+1}) - (1-\gamma_k)\grad\psi_\cX(x_k) - \gamma_k s_k, x- x_{k+1}\rangle\\
        &=  \langle v_k+\theta_k r_k, x-x_{k+1} \rangle +  \tfrac{1-\gamma_k}{\sigma_k} \langle  \nabla \psi_{\mathcal X}(x_{k+1}) - \grad\psi_\cX(x_k), x- x_{k+1}\rangle\\
        &\quad +\tfrac{\gamma_k}{\sigma_k}\langle \nabla \psi_{\mathcal X}(x_{k+1})-s_k, x- x_{k+1}\rangle\\
         &=  \langle v_k+\theta_k r_k, x-x_{k+1} \rangle +  \tfrac{1-\gamma_k}{\sigma_k} \langle  \nabla \psi_{\mathcal X}(x_{k+1}) - \grad\psi_\cX(x_k), x- x_{k+1}\rangle\\
         &\quad +\tfrac{\gamma_k}{\sigma_k q}\za{\sum_{j=(i_k-1)q+1}^{i_kq}}\langle \nabla \psi_{\mathcal X}(x_{k+1})-\nabla \psi_{\mathcal X}(x_{j}), x- x_{k+1}\rangle.
    \end{align*}
    Where in the first equality we used Lemma \ref{lemma 5 similar} part(d) and updated the rule of $\hat{x}_k$
 in Algorithm \ref{alg: VI-spider+}. Then by using the generalized three-point property of Bregman distance in Lemma \ref{lemma 5 similar} part (a) twice; we will have:
  \begin{align}
        \nonumber h(x_{k+1}) - h(x) &\leq \langle v_k+\theta_k r_k, x-x_{k+1} \rangle\\
        \nonumber& \quad+  \tfrac{1-\gamma_k} {\sigma_k} (\mathbf{D}_X(x,x_k)-\mathbf{D}_X(x,x_{k+1})-\mathbf{D}_X(x_{k+1},x_k))
        \\
        &\quad+ \tfrac{\gamma_k} {\sigma_k q} \za{\sum_{j=(i_k-1)q+1}^{i_kq}}(\mathbf{D}_X(x,x_i)-\mathbf{D}_X(x,x_{k+1})-\mathbf{D}_X(x_{k+1},x_i)),
    \end{align}
      by using the definition of $\delta_k$ \eyy{and $\bar{r}_k$}, and rearranging the terms, we have: 
    \begin{align}\label{change v to delta spider+}
         h(x_{k+1}) - h(x) &\leq \langle  \delta_k , x-x_{k+1} \rangle + \langle  (1-\beta)F(x_k)+\beta F(\tilde x_k), x-x_{k+1} \rangle  \\
        \nonumber&\quad+\eyy{\theta_k\langle  \bar{r}_k, \za{x-x_{k+1}} \rangle + \theta_k\langle  r_k-\bar{r}_k, x-x_{k+1}} \rangle\\
        &\nonumber \quad+  \tfrac{1-\gamma_k} {\sigma_k} (\mathbf{D}_X(x,x_k)-\mathbf{D}_X(x,x_{k+1})-\mathbf{D}_X(x_{k+1},x_k))
        \\
        &\quad+ \tfrac{\gamma_k} {\sigma_k  } (D^k_X(x)-\mathbf{D}_X(x,x_{k+1})-D^k_X(x_{k+1})).\nonumber
    \end{align}
     Now, by adding $\langle F(x_{k+1}), x_{k+1}-x \rangle$ to the both sides, and adding and subtracting $\langle \za{\theta_k}\bar r_k , \za{x-x_{k}} \rangle$ to the right-hand side of \eqref{change v to delta spider+} one can obtain: 
      \begin{align} \label{add and subtract grad L}
        &h(x_{k+1}) - h(x) + \langle F(x_{k+1}), x_{k+1}-x \rangle\\
        &\leq \langle  \delta_k , x-x_{k+1} \rangle + \theta_k\langle r_k - \bar r_k, \eyy{x-x_{k+1}} \rangle + \langle \bar r_{k+1} , x_{k+1} -x \rangle  +\eyy{\theta_k \langle \bar r_k, {x-x_{k}} \rangle+ \theta_k\langle  \bar{r}_k, {x_k-x_{k+1}} \rangle}\nonumber\\
         &\quad +  \tfrac{1-\gamma_k} {\sigma_k} (\mathbf{D}_X(x,x_k)-\mathbf{D}_X(x,x_{k+1})-\mathbf{D}_X(x_{k+1},x_k))
         +\tfrac{\gamma_k} {\sigma_k  } (D^k_X(x)-\mathbf{D}_X(x,x_{k+1})-D^k_X(x_{k+1})).\nonumber
    \end{align}
    Next, by using Cauchy-Schwartz inequality, followed by Assumption \ref{lip def} and Young's inequality  we obtain
    \begin{align}\label{bound by Cauchy}
        &\eyy{|\langle \bar{r}_k, {x_k-x_{k+1}} \rangle|} \\ \nonumber
        &\leq \tfrac{(1-\beta)^2L^2}{2\za{\alpha_1}}\norm{x_k-x_{k-1}}^2_\cX + \tfrac{\beta^2 L^2}{2\za{\alpha_2}}\norm{x_k-\tilde x_{k-1}}^2_\cX+\tfrac{\za{\alpha_1+\alpha_2}}{2}\norm{x_{k+1} -  x_{k}}^2_\cX\\ \nonumber
        &\leq \tfrac{(1-\beta)^2L^2}{\za{\alpha_1}}\bD_X(x_k,x_{k-1})+\tfrac{\beta^2L^2}{\alpha_2}D_X^{k-1}(x_k) + (\alpha_1+\alpha_2)\bD_X(x_{k+1},x_k),
    \end{align}
for any $\alpha_1,\alpha_2 >0$.
Note that the left-hand side of \eqref{add and subtract grad L} can be lower-bounded using monotonicty of $F$ as follows $h(x_{k+1}) - h(x) + \langle F(x_{k+1}), x_{k+1}-x \rangle\geq h(x_{k+1}) - h(x) + \langle F(x), x_{k+1}-x \rangle$. 
\eyy{Moreover, by using the previous inequality within \eqref{add and subtract grad L}, adding and subtracting the following terms (inspired by the right hand-sides of \eqref{eq:delta-norm2 for y spider+} and \eqref{eq:delta-norm2 for q spider+} containing differences of consecutive iterates)
\begin{align*}
    &(\tfrac{(1-\beta)q}{\eta_k^3}+\tfrac{1}{\eta_k^2})\bD_X(x_{k+1},x_k) +4(1-\beta)L^2({\eta_k^4}+{\eta_k^3})\sum_{t=i_kq+1}^k\bD_X(x_t,x_{t-1}) \\
    &\quad+16\theta_k\eta_{k}^2L^2\Big((1-\beta)^2 \bD_X(x_k,x_{k-1})+\beta^2 D^{k-1}_X(x_k)\Big), 
\end{align*}
in light of the definition of $A_k$ in the statement of the lemma, the desired result in \eqref{comb 3 terms spider+} can be obtained.}  
\end{proof}

\begin{remark}
Note that the retraction parameter $\gamma_k$ and damping parameter $\beta$ are introduced to control the upper bounds on the operator estimates (Lemma \ref{lemma: spider+}). This can be seen in the result of Lemma \ref{bound for gap} to create telescopic terms on the right-hand side of \eqref{comb 3 terms spider+}. 
\end{remark}

\section{Convergence Analysis}\label{sec:convergence analysis}

\subsection{Monotone VI} In this section, we demonstrate the complexity of Algorithm \ref{alg: VI-spider+} under monotonicity of $F$. We first formally state the assumptions.
\begin{assumption}\label{assum:sol-exist}
    A solution $x^*\in X$ to the VI problem in \eqref{VI def} exists.
\end{assumption}
\begin{assumption}\label{assum:monotone}
Assume that $F:\cX\to \cX^*$ is a monotone operator, i.e., for any $x,\bar x \in \cX$ $\fprod{F(x)-F(\bar x),x-\bar x}\geq 0$.
\end{assumption}
In this scenario, we consider the standard gap function defined as follows:
\begin{equation}\label{gap function}
    \cG(\bar x) \triangleq \sup_{x \in \tilde X_R}\{ \langle F(x),\bar x-x\rangle +h(\bar x)-h(x) \},
\end{equation}
where $\tilde X_R\subseteq X$ is a convex and compact set containing a solution of \eqref{VI def} to handle the possibility of unbounded domain \cite{nesterov2007dual}. 
    

In the next theorem, we provide the main result of this paper by finding a bound for the expected gap function. 

\begin{theorem}\label{Th: rate}
Let $\{x_k\}_{k\geq 0}$ be the sequence generated by VR-FoRMAB displayed in Algorithm \ref{alg: VI-spider+} initialized
from arbitrary vector $x_0 \in {\mathcal X}$. Suppose Assumptions \ref{lip def}, \ref{assum:sol-exist}, and \ref{assum:monotone} hold. For  $k\geq 0$ let $\theta_k=1$, $\gamma_k=\gamma\in (0,1)$, $\beta\in (0,1)$, and the step-size sequence \ey{$\sigma_k=\sigma\leq \min\{\tfrac{1-\gamma}{(1+9(1-\beta)+6(1-\beta)q)L},\tfrac{\gamma}{\beta L(1+4\beta/(1-\beta))}\}$}. Then, for any $K\geq 1$,  
\begin{align}
      \mE[\cG(\bar x_K)] &\leq  \frac{1}{K}\Big(\frac{1}{\sigma}+\ey{2(1-\beta)(q+2)L} \Big)B_X,
\end{align}
where $B_X\triangleq \sup_{x\in \tilde X_R}\bD_X(x,x_0)$. 
\end{theorem}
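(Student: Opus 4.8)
## Proof Plan

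The plan is to start from the one-step inequality \eqref{comb 3 terms spider+} in Lemma \ref{bound for gap}, take expectations, and sum over $k=0,\dots,K-1$ so that the various Bregman-distance terms telescope. The essential bookkeeping is to group the right-hand side of \eqref{comb 3 terms spider+} into: (i) genuinely telescoping terms $\tfrac{1-\gamma}{\sigma}(\bD_X(x,x_k)-\bD_X(x,x_{k+1}))$; (ii) the nested-averaging terms involving $D^k_X(\cdot)$, which telescope after invoking \eqref{compare D^k and D} in Lemma \ref{lem:aux} (noting the cancellation $-D^k_X(x_{k+1})$ against the $D^{k-1}_X(x_k)$-type term that appears one step later); (iii) the coefficient of $\bD_X(x_{k+1},x_k)$, which must be made nonpositive; and (iv) the ``momentum-remainder'' pieces $\theta_k\langle\bar r_k,x-x_k\rangle+\langle\bar r_{k+1},x_{k+1}-x\rangle$, which with $\theta_k=1$ should themselves telescope up to one boundary term that is controlled by $\bD_X(x_K,x_{K-1})$ via Lipschitzness of $F$ and Young's inequality. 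After taking the expectation, $\mE[A_k]$ must be shown nonpositive: $\mE[\langle\delta_k,x-x_{k+1}\rangle]$ and $\mE[\theta_k\langle r_k-\bar r_k,x-x_k\rangle]$ are handled by Lemma \ref{bound base on norm 2 delta} (inequalities \eqref{bound a}, \eqref{bound c}, each introducing their own telescoping $w^x$, $w^r$ sequences) combined with the variance bounds \eqref{eq:delta-norm2 for y spider+}, \eqref{eq:delta-norm2 for q spider+} of Lemma \ref{lemma: spider+}; the negative $\bD_X$-terms built into $A_k$ are precisely what absorb those variance bounds.

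Concretely, I would first fix $x=x^*$ (or an arbitrary $x\in\tilde X_R$) and set $\theta_k=1$, $\gamma_k=\gamma$, $\sigma_k=\sigma$, $\eta_k=\eta$ to be chosen. Summing \eqref{comb 3 terms spider+} from $0$ to $K-1$, the left side becomes $\sum_{k=0}^{K-1}\big(h(x_{k+1})-h(x)+\langle F(x),x_{k+1}-x\rangle\big)$, which by convexity of $h$ and linearity is lower-bounded by $K\big(h(\bar x_K)-h(x)+\langle F(x),\bar x_K-x\rangle\big)$ with $\bar x_K\triangleq\tfrac1K\sum_{k=1}^Kx_k$; taking the sup over $x\in\tilde X_R$ then yields $K\,\cG(\bar x_K)$ on the left after the expectation. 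On the right, the pure telescoping term contributes $\tfrac{1-\gamma}{\sigma}\bD_X(x,x_0)\le\tfrac{1-\gamma}{\sigma}B_X$; the $\gamma$-averaging terms contribute at most $\tfrac{\gamma}{\sigma}\sum_{k=0}^{K-1}\big(D^k_X(x)-D^k_X(x_{k+1})\big)-\tfrac{\gamma}{\sigma}\sum\bD_X(x,x_{k+1})\le\tfrac{\gamma}{\sigma}B_X$ using \eqref{compare D^k and D}; and the momentum boundary term contributes at most $\sqrt{q}L\,B_X$ after choosing $\sigma$ so the accumulated $\bD_X(x_{t},x_{t-1})$ coefficients are nonpositive. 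This is where the two step-size constraints $\sigma\le\tfrac{1-\gamma}{(2\sqrt q+4)L}$ and $\sigma\le\tfrac{\gamma}{L\beta(1+2\beta/\sqrt q)}$ enter: they are exactly the conditions making the aggregate coefficient of each $\bD_X(x_{k+1},x_k)$ (from term (iii), from $\tfrac{1}{\eta}\bD_X$ in $A_k$, from the $\alpha_1,\alpha_2,\eta$ remainders, and from the sum $\sum_{t=i_kq+1}^k\bD_X(x_t,x_{t-1})$ which over a full inner cycle of length $q$ multiplies the coefficient by at most $q$) nonpositive, after optimizing $\alpha_1,\alpha_2\sim L$ and $\eta\sim 1/L$. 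The choice $\beta=1-1/\max\{\oc\uc,1\}$ makes the factor $(1-\beta)^2\oc^2\uc^2=\min\{1,\oc^2\uc^2\}\cdot$(something $\le1$), i.e. bounds $(1-\beta)\oc\uc\le1$, which is what kills the norm-equivalence blow-up in \eqref{eq:delta-norm2 for y spider+}.

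The main obstacle I anticipate is the careful accounting of the $\bD_X(x_{k+1},x_k)$ and $\bD_X(x_t,x_{t-1})$ coefficients: there are several sources (the $(\alpha_1+\alpha_2)$ term, the $\tfrac1\eta$ term, the $\eta$-weighted variance terms, and — crucially — the cumulative sum $\sum_{t=i_kq+1}^k\bD_X(x_t,x_{t-1})$ whose total weight over the summation is amplified by the cycle length $q$, which is the origin of the $\sqrt q$ factors in the step-size and in the final rate), and one must verify that a single choice of $\eta,\alpha_1,\alpha_2$ proportional to $L$ together with the stated $\sigma$ simultaneously makes all of these nonpositive. A secondary subtlety is the treatment of $D^k_X(x_{k+1})$: it carries a favorable (negative) sign in \eqref{comb 3 terms spider+} and must be matched against the $D^{k-1}_X(x_k)$ appearing with a positive sign at the next iteration, so the cancellation requires lining up indices across consecutive $k$ correctly, and at the cycle boundaries (where $\tilde x_k$ resets) one checks $D^{i_kq}_X(x_{i_kq})$ terms vanish or are absorbed into $B_X$. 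Once these coefficient inequalities are in place, the expectation step is routine since $\mE[\delta_k]=0$, $\mE[r_k-\bar r_k]=0$ and the variance bounds are already in the right form, and dividing by $K$ gives the claimed $\tfrac1K(\tfrac1\sigma+\sqrt q L)B_X$ bound.
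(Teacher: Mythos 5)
Your plan follows essentially the same route as the paper's proof: sum the one-step bound \eqref{comb 3 terms spider+} with $\theta_k=1$, $\alpha_1,\alpha_2\sim L$, telescope the $\bar r_k$-terms using $\bar r_0=0$ and Young's inequality, control $\sum_k\mE[A_k]$ through the auxiliary sequences of Lemma \ref{bound base on norm 2 delta} together with the variance bounds of Lemma \ref{lemma: spider+}, use \eqref{compare D^k and D} and the step-size conditions to drop the remaining terms, and finish with Jensen's inequality and the supremum over $\tilde X_R$. The only (immaterial) bookkeeping slip is the attribution of the $\sqrt{q}L\,B_X$ term: in the paper it arises from $1/\eta$ with $\eta=\tfrac{1}{(1-\beta)\oc\uc\sqrt{q}L}$ in the bound on $\sum_k\mE[A_k]$ (the $w_0^x,w_0^r$ Bregman distances), while the momentum boundary term $\langle\bar r_K,x_K-x\rangle$ is fully absorbed by the leftover nonpositive terms.
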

\begin{proof}
Consider the result in Lemma \ref{bound for gap} and let $\sigma_k=\sigma$, $\gamma_k = \gamma$, $\theta_k=1$, $\alpha_1=(1-\beta)L$, $\alpha_2=\beta L$, and \ey{$\Gamma\triangleq (1+16\eta^2(1-\beta)L)(1-\beta)L+4(1-\beta)qL^2({\eta^4}+{\eta^3})$} for $k\geq 0$, then summing the inequality over $k = 0, \dots K-1$, 
dividing both sides by $K$, using  \eqref{compare D^k and D}, and the fact that \eyy{$\sum_{k=0}^{K-1}\sum_{t=i_kq+1}^k\bD_X(x_t,x_{t-1})\leq q\sum_{k=0}^{K-1}\bD_X(x_k,x_{k-1})$} imply that
 \begin{align}\label{one-step-sum_spiderplus}
     &\frac{1}{K}\sum_{k=0}^{K-1} \Big( h(x_{k+1}) - h(x) + \langle F(x), x_{k+1}-x \rangle \Big)\\
        &\leq  \frac{1}{K}\za{\Big [}\sum_{k=0}^{K-1}A_k \za{-} \langle\bar r_0, x_0-x \rangle \za{+} \langle \bar r_K , x_K -x \rangle +  \tfrac{1-\gamma}{\sigma} (\mathbf{D}_X(x,x_0)-\mathbf{D}_X(x,x_{K}))\nonumber\\
        &\quad +\tfrac{\gamma}{\sigma } (\bD_X(x,x_0)-\bD_X(x,x_{K}))\za{\Big]}+\frac{1}{K}\sum_{k=0}^{K-1}\Big((1+\ey{16\eta^2\beta L})\beta L D_X^{k-1}(x_k) - \tfrac{\gamma}{\sigma }D^k_X(x_{k+1})\Big)
        \nonumber\\
         &\quad +\frac{1}{K}\sum_{k=0}^{K-1}\Big( \eyy{\Gamma} \bD_X(x_k,x_{k-1})+ (\eyy{\tfrac{(1-\beta)q}{\eta^3}+\tfrac{1}{\eta^2}}+L-\tfrac{1-\gamma}{\sigma})\bD_X(x_{k+1},x_k)\Big)\nonumber\\
         &\leq \frac{1}{K}\Big[\sum_{k=0}^{K-1}A_k \za{-} \langle\bar r_0, x_0-x \rangle + \langle \bar r_K , x_K -x \rangle +  \frac{1-\gamma}{\sigma} (\mathbf{D}_X(x,x_0)-\mathbf{D}_X(x,x_{K}))\nonumber\\
        &\quad +\frac{\gamma}{\sigma } (\bD_X(x,x_0)-\bD_X(x,x_{K}))\Big]+\frac{1}{K}\frac{\gamma}{\sigma}\Big(D_X^{-1}(x_0) - D^{K-1}_X(x_{K})\Big)
        \nonumber\\
         &\quad +\frac{1}{K}\Big(\frac{1-\gamma}{\sigma}-\eyy{L\big(1+2\sqrt{2}(1-\beta)+2(1-\beta)q\big)}\Big)\Big( \bD_X(x_0,x_{-1})-\bD_X(x_{K},x_{K-1})\Big)
         .\nonumber
\end{align}
where the last inequality follows by selecting \ey{$\eta^2 = \frac{1}{4(1-\beta)L}$}, $\eta^3=\eta^4=\frac{1}{2L}$, and using the step-size condition. 

Next, we provide an upper bound on $\langle \bar r_K , x_K -x \rangle \za{-} \langle\bar r_0, x_0-x \rangle $ by using Young's inequality and noting that $\bar r_0=0$ due to the initialization of the algorithm, i.e., $x_{-1}=x_0$. Therefore, similar to \eqref{bound by Cauchy} 
\begin{align}\label{inner pro bound q}
\langle \bar r_{K} , x_{K} -x \rangle\leq \|\bar r_K\|_{\cX^*}\|x-x_K\|_\cX  \leq {(1-\beta)L}\bD_X(x_K,x_{K-1})+\beta L D_X^{K-1}(x_K) + L\bD_X(x,x_K),
\end{align}
moreover, the left-hand side of \eqref{one-step-sum_spiderplus} can be lower-bounded using Jensen's inequality due to the convexity of $h$ and defining $\bar x_K\triangleq \frac{1}{K}\sum_{k=0}^{K-1}x_k$. Therefore, using \eqref{inner pro bound q} within \eqref{one-step-sum_spiderplus} and the fact that $\bD_X(x_0,x_{-1})=0$ and $D_X^{-1}(x_0)=0$ we conclude that
\begin{align}\label{bound with q bar k}
    &h(\bar x_K) - h(x) + \langle F(x), \bar x_K-x \rangle \\
    &\leq \frac{1}{K}\Big[\sum_{k=0}^{K-1}A_k +  \frac{1}{\sigma} \mathbf{D}_X(x,x_0)+\big(L-\frac{1}{\sigma}\big)\mathbf{D}_X(x,x_{K})\nonumber\\
        &\quad +\big(\ey{L(1+5(1-\beta)+2(1-\beta)q)}-\frac{1-\gamma}{\sigma}\big)\bD_X(x_K,x_{K-1})+\big(\beta L-\frac{\gamma}{\sigma}\big) D_X^{K-1}(x_K)\Big].\nonumber
\end{align}
Note that since $\beta\in [0,1]$ and $q\geq 1$, from the stepsize condition one clearly verify that \eyy{the last three terms in the right hand-side of the above inequality have non-positive coefficients}, hence, 
can be dropped. Moreover, from the definition of $A_K$, using Lemmas \ref{bound base on norm 2 delta} and \ref{lemma: spider+}, \eyy{and selecting $\eta^1=\frac{1}{2(1-\beta)qL}$} we can show that
\begin{align}\label{eq:exp-sum-Ak}
& \sum_{k=0}^{K-1} \mE[A_k] \\ &=\sum_{k=0}^{K-1} \mE\Big[\langle\delta_k , x-x_{k+1} \rangle+\langle \za{r}_k - \bar r_k, \eyy{x-x_{k+1}} \rangle
- \eyy{4(1-\beta)L} \sum_{t=\eyh{i_kq+1}}^k \bD_X(x_t,x_{t-1}) \nonumber \\
&\quad-\eyy{(1-\beta)(2+q)L}\bD_X(x_{k+1},x_k) - \ey{4(1-\beta)L} \bD_X(x_k,x_{k-1})-\ey{\frac{4\beta^2 L}{(1-\beta)}} D^{k-1}_X(x_k)\Big] \nonumber\\
&\leq \sum_{k=0}^{K-1}\mE\Big[\langle {\delta_k} ,w_{k}^x-x_{k} \rangle + \langle \za{r}_k-\bar r_k ,w_{k}^r-x_k \rangle \Big]+\eyy{2(1-\beta)qL}\sum_{k=0}^{K-1}\mE\Big[\bD_X(x,w_{k}^x)-\bD_X(x,w_{k+1}^x)\Big] \nonumber\\
&\quad + \ey{4(1-\beta)L}\sum_{k=0}^{K-1}\mE\Big[\bD_X(x,w_{k}^\za{r})-\bD_X(x,w_{k+1}^\za{r})\Big]\nonumber\\
& \leq \eyy{2(1-\beta)qL}\bD_X(x,w_{0}^x)+ \ey{4(1-\beta)L}\bD_X(x,w_{0}^r),\nonumber
\end{align} 
where the last inequality follows from unbiasedness of $\delta_k$ and $\za{r}_k-\bar r_k$ and dropping the non-positive terms. 
Therefore, combining \eqref{bound with q bar k} with \eqref{eq:exp-sum-Ak}  and taking the supremum over $x\in \tilde X_R$ followed by taking the expectation of both sides of the aforementioned inequality implies that
\begin{align}\label{eq:one-step-sum-2}
    \mE[\cG(\bar x_K)] \leq 
    \frac{1}{K}\Big(\frac{1}{\sigma}+\ey{2(1-\beta)(q+2)L} \Big)B_X,
\end{align}
where $B_X= \sup_{x\in \tilde X_R}\bD_X(x,x_0)$ and note that $w_0^x=w_0^r=x_0$.
\end{proof}    

\begin{corollary}\label{cor:rate-monotone}
Under the premises of Theorem \ref{Th: rate} , if the step-sizes and parameters of Algorithm \ref{alg: VI-spider+} are selected as $q=n$, $S=1$, \ey{$\beta = 1-\frac{1}{\sqrt{n}}$, $\theta_k = 1$, $\gamma\in (0,1)$, $\sigma_k= \min\{\tfrac{1-\gamma}{(1+6\sqrt{n}+9/\sqrt{n})L},\frac{\gamma}{(1+4\sqrt{n})L}\}$}, then $ \mE[\cG(\bar x_K)]  \leq  \mathcal O(\tfrac{L\sqrt{n}}{K})$. Moreover, the oracle
complexity to achieve an $\epsilon$-gap is $\cO(n+\sqrt{n}L/\epsilon)$. 
\end{corollary}
\begin{proof}
Let $q = n$, then it is easy
to verify that for any $\gamma\in (0,1)$, ${\sigma}=\cO(\frac{1}{\sqrt{n}L})$. From Theorem \ref{Th: rate} we have that $  \mE[\cG(\bar x_K)]  \leq  D/K$, where \ey{$D=(\tfrac{1}{\sigma}+2(1-\beta)(q+2)L)B_X=\mathcal O(\sqrt n L)$}, hence, 
we conclude that $ \mE[\cG(\bar x_K)]  \leq  \mathcal O(\tfrac{L\sqrt{n}}{K})$.
Moreover, to achieve $\mE[\cG(\bar x_K)]  \leq \epsilon$ the oracle (sample) complexity can be calculated as $n\lceil \frac{K}{q} \rceil+S (K-\lfloor \frac{K}{q} \rfloor)=\cO(n+\frac{nD}{q\epsilon}+\frac{S D}{\epsilon})$. Now by choosing $S = 1$ one can obtain $\cO(n+\frac{nD}{q\epsilon}+\frac{S D}{\epsilon}) = \cO(n+\sqrt{n}L/\epsilon)$. 
\end{proof}
\begin{remark}
We would like to emphasize that both SVR-APD \cite{yazdandoost2023stochastic} and VR-MP \cite{Alacaoglu2022Stochastic} achieve the same oracle complexity of $\mathcal{O}(n+\frac{\sqrt{n}}{\epsilon})$ using SVRG-type variance reduction. However, our method introduces a novel generalization of SPIDER-type variance reduction, specifically designed for VIs. Unlike SVR-APD, which addresses a specific minimax problem, our approach handles a more general case from the VI perspective. Moreover, while the VR-MP algorithm requires two computations of the Bregman proximal step in each iteration of the inner loop, our method requires only one. 
\end{remark}

\begin{remark}\label{beta = 0}[Simplfied updates for structured norm]
\eyy{It should be noted that we used some crude bounds when obtaining the bounds on the variance of sample operators in Lemma \ref{lemma: spider+} due to utilizing arbitrary normed vector space. These bounds can be improved when the vector space $\cX$ is equipped with a $p$-norm for $p\in \{1,2\}$. In particular, we can show that 
\begin{align*}
    &\mE\left[\fprod{\delta_x,w^x_k-w_{k+1}^x+x_k-x_{k+1}}\right]
\\
&\leq \tfrac{1}{\eta_k^3} \mE[\bD_X(w_{k+1}^x,w_k^x)]+\tfrac{1}{\eta_k^4}\mE[\bD_X(x_{k+1},x_k)]
 +2(1-\beta)^2L^2(\eta_k^3+\eta_k^4)\sum_{t=i_kq+1}^k\mE[\bD_X(x_t,x_{t-1})],
\end{align*}
which implies that the coefficient of consecutive iterates is no longer affected by the term $(1-\beta)q$ and one can select $\eta_k^1=\eta_k^2=\eta_k^3=\eta_k^4=\frac{1}{(1-\beta)L\sqrt{n}}$ leading to the stepsize $\sigma\leq \min\{\frac{1-\gamma}{2(1+\sqrt{n})L},\frac{\gamma}{\beta L (1+2\beta/\sqrt{n})}\}$. 
This implies that one can select $\beta=\gamma=0$ and $\sigma\leq \frac{1}{2(1+\sqrt{n})L}$ which simplifies the updates within our proposed algorithm while maintaining the complexity of $\cO(n+\frac{\sqrt{n}}{\epsilon})$. This insight underscores the challenge inherent in employing the Bregman distance function with an arbitrary norm in the algorithm design.}
\end{remark}


\subsection{Non-monotone VI}
In this section, we analyze our proposed method in a non-monotone setting. To enable this extension, we introduce essential assumptions and lemmas 
\eyh{for our analysis. In this scenario, we are interested in finding $\bar x\in X$ such that $\mathbf{0}\in F(\bar x)+\partial(h+\mathbb{I}_X)(\bar x)$. Consequently, we define the following gap function to find an $\epsilon$-approximated solution of \eqref{VI def}.
\begin{definition}
$\bar x\in X$ is an $\epsilon$-approximated solution when $\textbf{dist}(\mathbf{0},F(\bar x)+\partial (h+\mathbb{I}_X)(\bar x))\leq \epsilon$ where $\textbf{dist}(\cdot,\cdot)$ denotes the distance function, i.e., $\textbf{dist}(x,S)\triangleq \min_{s\in S} \|x-s\|_\cU$ for a given normed vector space $\cU$, a convex set $S\subset \cU$, and a point $x\in\cU$.
\end{definition}}%

\begin{assumption}[Weak Minty Solution]\label{assum:weak-minty}
There exists $x^* \in X$ and some $\rho >0$ such that 
\begin{align}
        - \rho\norm{v}_{\cX^*}^2 \leq \fprod{v,x-x^*} , \quad \forall x\in X,~ \forall v \in \partial (h+\mathbb{I}_X)(x)+F(x).
\end{align}
\end{assumption}
\zr{
   The recent studies on the weak Minty condition have shown that it holds under several structured conditions. In particular, this condition is satisfied when the operator $F$ is Lipschitz continuous and exhibits properties such as negative comonotonicity or positive cohypomonotonicity \cite{bohm2022solving, choudhury2023single}. 
   Furthermore, in the context of min-max optimization problem, the weak Minty condition (with $\rho=0$) contains all quasiconvex-concave and starconvex-concave problems \cite{gorbunov2022stochastic}. For further examples and applications see \cite{daskalakis2020independent,diakonikolas2021efficient,pethick2023escaping}.
}
\begin{assumption}\label{assum:bregman}
The Bregman distance generating function $\psi_\cX$ defined in Definition \ref{bregman def} has a Lipschtiz continuous gradient, i.e., for any $x,\bar x\in\cX$ we have $\norm{\grad\psi_\cX(x)-\grad\psi_\cX(\bar x)}_{\cX^*}\leq L_{\psi}\norm{x-\bar x}_\cX$ for some $L_\psi\geq 1$. 
\end{assumption}
An example of distance generating function satisfying  \eqref{assum:bregman} is the quadratic kernel $\psi_\cX(x)=\frac{1}{2}x^\top A x$ for some positive definite matrix $A$. One can also ensure this assumption by restricting the domain. For example, consider generalized Kullback–Leibler or Itakura–Saito divergence generated by the relative entropy function $\psi_\cX(x)=\sum_{i=1}^n x_i\log(x_i)$ and the logarithmic barrier distance generating function $\psi_\cX(x)=-\sum_{i=1}^n\log(x_i)$ within the domain $\reals^n_{++}$, respectively. The above assumption is satisfied when the domain is restricted to $\{x\in\reals^n_{+}\mid x\geq c\}$ for some $c> \mathbf{0}$. The application instance of such scenario include distributionally robust optimization where the robust empirical distributions set is selected as $\{x\in\reals^n_+\mid x\geq \frac{\epsilon}{n}\mathbf{1}_n,~\bD_X(x,\mathbf{1}_n/n)\leq \rho\}$ for some $\epsilon,\rho>0$ \cite{namkoong2016stochastic}. 

\ey{One of the main challenges in employing the Bregman distance function within the non-monotone regime is the difficulty in deriving a non-crude bound on the variance of the stochastic operator's error. This arises from the inability to leverage the Pythagorean theorem, a common tool when working with Euclidean distances. To address this, we establish a practical bound on the error of the stochastic operator, $\delta_k$, using only the triangle inequality. Notably, this bound can be further tightened when the norm is specifically chosen as the $l_2$-norm.} 
\begin{lemma}\label{lem:error-stochastic-nonmonotone}
    \ey{Let $\{x_k\}_{k\geq 0}$ be the sequence generated by Algorithm \ref{alg: VI-spider+}. Then, for any $k\geq 0$ and $a\in [0,1]$, $\|\delta_k\|_{\cX^*}^2\leq 4L^2(1-\beta)^a\sum_{t=i_kq+1}^k\bD_X(x_t,x_{t-1})$ such that $(1-\beta)^{2-a}q\leq 1$.} 
\end{lemma}
\begin{proof}
    \ey{
    Recall the definition of $\delta_k$ in Definition \ref{def param} and the relation in \eqref{define delta}. Using triangle inequality followed by Young's inequality with parameter $\eta>0$ implies that
    \begin{align*}
        \norm{\delta_k}^2_{\cX^*}&\leq (1-\beta)\left(\sum_{t=i_kq+1}^k\|e_t\|\right)\|\delta_k\|\\
        &\leq (1-\beta)\sum_{t=i_kq+1}^k\left(\tfrac{\eta}{2}\|e_t\|_{\cX^*}^2+\tfrac{1}{2\eta}\|\delta_k\|_{\cX^*}^2\right)\\
        &\leq \tfrac{\eta(1-\beta)}{2}\sum_{t=i_kq+1}^k\|e_t\|_{\cX^*}^2 ~ +\tfrac{(1-\beta)q}{2\eta}\|\delta_k\|_{\cX^*}^2,
    \end{align*}
    where in the last inequality we used $k-i_kq\leq q$ due to $i_k=\lfloor k/q \rfloor$. Next, rearranging the terms, using the upper bound $\|e_t\|$ in \eqref{bound for norm2 e^y}, and strong convexity of Bregman distance we obtain $\|\delta_k\|_{\cX^*}^2\leq \frac{4(1-\beta)\eta^2 L^2}{2\eta - (1-\beta)q}\sum_{t=i_kq+1}^k\bD_X(x_t,x_{t-1})$  if $2\eta-(1-\beta)q>0$. Moreover, selecting $\eta=\frac{1-\sqrt{1-(1-\beta)^{2-a}q}}{(1-\beta)^{1-a}}$ for any $a\in[0,1]$ such that $(1-\beta)^{2-a}q\leq 1$ implies that $\frac{4(1-\beta)\eta^2 L^2}{2\eta - (1-\beta)q}=4L^2(1-\beta)^a$. Finally, $2\eta-(1-\beta)q>0$ holds since $q\geq 0$ and $\beta\in (0,1)$.}
\end{proof}

\ey{Next, we show a one-step analysis of our proposed algorithm based on the above assumptions.}
\begin{lemma}\label{lem:non-monotne}
Let $\{x_k\}_{k\geq 0}$ be the sequence generated by VR-FoRMAB displayed in Algorithm \ref{alg: VI-spider+} initialized from arbitrary vectors $x_0 \in {\mathcal X}$. 
Suppose Assumptions \ref{lip def}, \ref{assum:weak-minty}, and \ref{assum:bregman} hold, and $ \za{r}_k$, $\delta_k$, and $\bar r_k$ are defined in Definition \ref{def param}. For any $k\geq 0$, the following result holds:
\begin{align}\label{eq:one-step-nonmonotone}
    0 &\leq \mE[\langle \bar r_{k+1} , x_{k+1} -x^* \rangle] -\theta_k\mE[\langle \bar r_k, x_{k} - x^* \rangle]   \\
     &\quad  +  \tfrac{1-\gamma_k} {\sigma_k} \mE[\mathbf{D}_X(x^*,x_k)-\mathbf{D}_X(x^*,x_{k+1})]+\tfrac{\gamma_k} {\sigma_k  } \mE[D^k_X(x^*)-\mathbf{D}_X(x^*,x_{k+1})] \nonumber\\
     &\quad +\mE\big[\cC^k_1\mathbf{D}_X(x_{k+1},x_k)+ \cC^k_2 D_X^k(x_{k+1}) +\cC^k_3 \mathbf{D}_X(x_k,x_{k-1}) + \cC^k_4 D_X^{k-1}(x_{k})\nonumber\\
     &\quad +  \cC^k_5 \sum_{t=\eyh{i_kq+1}}^k \bD_X(x_t,x_{t-1})\big],\nonumber
\end{align}
where $\cC^k_1\triangleq \ey{16\rho((1-\beta)^2L^2+ \tfrac{L_{\psi}^2(1-\gamma_k)^2}{\sigma_k^2})} +(\theta_k+q(1-\beta))L -\tfrac{1- \gamma_k} {\sigma_k}$ and $\cC^k_2\triangleq 16\rho(\beta^2 L^2+ \tfrac{L_{\psi}^2\gamma_k^2}{\sigma_k^2}) -\tfrac{\gamma_k} {\sigma_k}$, $\cC^k_3\triangleq 16\rho(1-\beta)^2 L^2\theta_k^2+\theta_k(1-\beta)L$, $\cC^k_4\triangleq 16\rho\beta^2 L^2\theta_k^2+\theta_k\beta L$, and $\cC^k_5\triangleq \ey{16\rho (1-\beta)^aL^2}+\ey{4(1-\beta) L}$. 
\end{lemma}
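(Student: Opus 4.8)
The plan is to start from the one-step inequality in Lemma \ref{bound for gap}, which was derived only using the definition of the updates and Assumption \ref{lip def} up to the point where monotonicity of $F$ was invoked; I would re-derive that bound replacing the monotonicity step by the weak Minty property. Concretely, in the proof of Lemma \ref{bound for gap} the inequality \eqref{add and subtract grad L} holds before monotonicity is used, with $x$ replaced by the weak-Minty point $x^*$. Instead of lower-bounding the left-hand side via $\langle F(x_{k+1}),x_{k+1}-x^*\rangle\geq\langle F(x^*),x_{k+1}-x^*\rangle$, I would use that $v_{k+1}^*\triangleq \tfrac{1}{\sigma_k}(\grad\psi_\cX(\hat x_k)-\grad\psi_\cX(x_{k+1}))-v_k-\theta_k r_k+$ (a subgradient of $h+\mathbb I_X$ at $x_{k+1}$) lies in $F(x_{k+1})+\partial(h+\mathbb I_X)(x_{k+1})$ after adding and subtracting $F(x_{k+1})$ appropriately; applying Assumption \ref{assum:weak-minty} at $x=x_{k+1}$ gives $0\leq \langle v, x_{k+1}-x^*\rangle + \rho\|v\|_{\cX^*}^2$ for the relevant $v$, which produces the leading ``$0\leq$'' and an extra $\rho\|\cdot\|^2$ term.

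The key steps, in order: (i) identify the element $v\in F(x_{k+1})+\partial(h+\mathbb I_X)(x_{k+1})$ coming from optimality of the prox step in line 16 of Algorithm \ref{alg: VI-spider+}; (ii) apply Assumption \ref{assum:weak-minty} to get $-\rho\|v\|_{\cX^*}^2\leq\langle v, x_{k+1}-x^*\rangle$; (iii) bound $\|v\|_{\cX^*}^2$: it decomposes into contributions from $\tfrac{1}{\sigma_k}(\grad\psi_\cX(\hat x_k)-\grad\psi_\cX(x_{k+1}))$, from $\delta_k$ and $\bar r_{k+1}-\bar r_k$ type error terms, and from Lipschitz terms $F(x_k)-F(x_{k+1})$, etc. Using Assumption \ref{assum:bregman} ($L_\psi$-Lipschitz $\grad\psi_\cX$) together with the retraction identity $\grad\psi_\cX(\hat x_k)=(1-\gamma_k)\grad\psi_\cX(x_k)+\gamma_k s_k$ one gets $\|\grad\psi_\cX(\hat x_k)-\grad\psi_\cX(x_{k+1})\|_{\cX^*}^2 \lesssim L_\psi^2((1-\gamma_k)^2\|x_k-x_{k+1}\|_\cX^2 + \gamma_k^2\|\tilde x_k - x_{k+1}\|_\cX^2)$, where I would use $\|\grad\psi_\cX(x_{k+1})-s_k\|_{\cX^*}\leq \tfrac1q\sum_j\|\grad\psi_\cX(x_{k+1})-\grad\psi_\cX(x_j)\|_{\cX^*}$ by Jensen, then Lipschitzness and $(\sum a_i)^2\leq m\sum a_i^2$ as in Lemma \ref{lem:aux}; each squared-norm term is converted to a Bregman distance (or a $D^k_X$/$D^{k-1}_X$ term) via strong convexity, and to matching $\mathbf D_X(x_{k+1},x_k)$, $D^k_X(x_{k+1})$, $\mathbf D_X(x_k,x_{k-1})$, $D^{k-1}_X(x_k)$ pieces, which is where the factor $8\rho$ and the $L_\psi^2/\sigma_k^2$ coefficients in $\cC_1^k,\cC_2^k$ appear; (iv) feed this back into \eqref{add and subtract grad L} (with $x=x^*$), combine the $\mathbf D_X(x_{k+1},x_k)$ coefficients with those already present from the $\langle r_k, x_k-x_{k+1}\rangle$ bound \eqref{bound by Cauchy} and from the $1/\eta_k$ bookkeeping, and finally take expectations, using $\mE[\delta_k]=0$ and $\mE[r_k-\bar r_k]=0$ (Lemma \ref{lemma: spider+}) to kill the linear-in-error terms and Lemma \ref{lemma: spider+}'s variance bounds to absorb $\mE\|\delta_k\|^2$, $\mE\|r_k-\bar r_k\|^2$ into the stated $\cC_5^k\sum_t \mathbf D_X(x_t,x_{t-1})$ and $\cC_3^k,\cC_4^k$ terms. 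Choosing $\theta_k$ as in the algorithm and $\eta_k=\tfrac{1}{(1-\beta)\oc\uc\sqrt q L}$ as in the monotone proof produces exactly the coefficients displayed.

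The main obstacle I anticipate is step (iii): correctly isolating the subgradient element $v$ from the non-Euclidean prox update and controlling $\|v\|_{\cX^*}^2$ in a way that yields precisely the coefficients $8\rho((1-\beta)^2L^2+L_\psi^2(1-\gamma_k)^2/\sigma_k^2)$ and $8\rho(\beta^2L^2+L_\psi^2\gamma_k^2/\sigma_k^2)$ — this requires carefully splitting $v$ into a bounded number (here, effectively four) of groups before applying $\|\sum_{i=1}^m a_i\|^2\leq m\sum\|a_i\|^2$ so that the multiplicative constant is exactly what is claimed, and then being disciplined about which squared distances are ``current'' ($\mathbf D_X(x_{k+1},x_k)$, $D^k_X(x_{k+1})$) versus ``lagged'' ($\mathbf D_X(x_k,x_{k-1})$, $D^{k-1}_X(x_k)$, $\sum_t\mathbf D_X(x_t,x_{t-1})$). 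Everything else is bookkeeping analogous to the proof of Lemma \ref{bound for gap} and the monotone Theorem \ref{Th: rate}.
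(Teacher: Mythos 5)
Your proposal follows essentially the same route as the paper: the paper defines $\mathfrak{U}_k\triangleq F(x_{k+1})-(v_k+\theta_k r_k+\tfrac{1}{\sigma_k}(\grad\psi_\cX(x_{k+1})-\grad\psi_\cX(\hat x_k)))\in F(x_{k+1})+\partial(h+\mathbb{I}_X)(x_{k+1})$ from the prox optimality condition, applies the weak Minty inequality to it, bounds $\|\mathfrak{U}_k\|_{\cX^*}^2$ by splitting into the four groups $\bar r_{k+1}$, $\delta_k$, $\theta_k r_k$, and the $\grad\psi$ difference (using Assumption \ref{assum:bregman}, the retraction identity, and Lemma \ref{lemma: spider+}), and handles $\langle\delta_k+\theta_k r_k,x_k-x_{k+1}\rangle$ with Young's inequality and $\eta_k=\tfrac{1}{(1-\beta)\oc\uc\sqrt{q}L}$ before taking expectations and using unbiasedness --- exactly your steps (i)--(iv). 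The only cosmetic difference is that the paper works directly from the optimality inclusion (so no $h(x_{k+1})-h(x^*)$ terms ever appear) rather than retaining the left-hand side of \eqref{add and subtract grad L}, which is what your key-step description effectively amounts to anyway.
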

\begin{proof}
Considering the update of $x_{k+1}$, the optimality condition of the subproblem implies that
\begin{align*}
    0\in \partial (h+\mathbb{I}_X)(x_{k+1})+v_k+\theta_k \za{r}_k+\frac{1}{\sigma_k}(\grad \psi_\cX(x_{k+1})-\grad\psi_\cX(\hat x_k)),
\end{align*}
where $\mathbb{I}_X(\cdot)$ denotes the indicator function of set $X$. 
Adding $F(x_{k+1})$ to both sides of above inclusion, defining $\mathfrak{U}_k\triangleq F(x_{k+1}) - (v_k+\theta_k \za{r}_k+\frac{1}{\sigma_k}(\grad \psi_\cX(x_{k+1})-\grad\psi_\cX(\hat x_k)))$ and rearranging the terms implies that
\begin{align}\label{eq:opt-inclusion-x}
    \mathfrak{U}_k\in F(x_{k+1})+\partial (h+\mathbb{I}_X)(x_{k+1}).
\end{align}
Therefore, from Assumption \ref{assum:weak-minty} followed by applying Lemma \ref{lemma 5 similar} parts (a) and (d) we conclude that
\begin{align}\label{eq:opt-cond-bound}
    -\rho\norm{\mathfrak{U}_k}_{\cX^*}^2 &\leq \fprod{\mathfrak{U}_k,x_{k+1}-x^*}\\
    &= \fprod{v_k+\theta_k \za{r}_k - F(x_{k+1}), x^* - x_{k+1}} + \tfrac{1}{\sigma_k}\fprod{\grad \psi_\cX(x_{k+1})-\grad\psi_\cX(\hat x_k), x^*-x_{k+1}} \nonumber\\
    &= \langle  \delta_k , x^*-x_{k+1} \rangle + \theta_k\langle \za{r}_k - \bar r_k, x^*-x_{k} \rangle + \langle \bar r_{k+1} , x_{k+1} -x^* \rangle +\langle  \theta_k \bar r_k, x^*-x_{k} \rangle \nonumber \\
     &\quad + \langle \theta_k r_k, x_k-x_{k+1}\rangle +  \tfrac{1-\gamma_k} {\sigma_k} (\mathbf{D}_X(x,x_k)-\mathbf{D}_X(x,x_{k+1})-\mathbf{D}_X(x_{k+1},x_k))
     \nonumber\\
     &\quad+\tfrac{\gamma_k} {\sigma_k  } (D^k_X(x)-\mathbf{D}_X(x,x_{k+1})-D^k_X(x_{k+1}))\nonumber\\
     &= \langle  \delta_k , x^*-x_k \rangle + \theta_k\langle r_k - \bar r_k, x^*-x_{k} \rangle + \langle \bar r_{k+1} , x_{k+1} -x^* \rangle +\langle \theta_k \bar r_k, x^*-x_{k} \rangle \nonumber \\
     &\quad + \underbrace{\langle \delta_k + \theta_k r_k, x_k-x_{k+1} \rangle}_{(*)} +  \tfrac{1-\gamma_k} {\sigma_k} (\mathbf{D}_X(x^*,x_k)-\mathbf{D}_X(x^*,x_{k+1})-\mathbf{D}_X(x_{k+1},x_k))
     \nonumber\\
     &\quad+\tfrac{\gamma_k} {\sigma_k  } (D^k_X(x^*)-\mathbf{D}_X(x^*,x_{k+1})-D^k_X(x_{k+1})). \nonumber
\end{align}
Next, we provide an upper bound on $(*)$ using Lemma \ref{lemma: spider+} with $\eta_k^3=1/L$ as follows
\begin{align}\label{eq:first-inner-delta}
    (*) &= \mE[\langle \delta_k, x_k-x_{k+1} \rangle+\theta_k\fprod{r_k,x_k-x_{k+1}}] \\
    &\leq (1-\beta)qL \mE[\bD_X(x_{k+1},x_k)] + \ey{4(1-\beta)L} \sum_{t=i_kq+1}^k\mE[\bD_X(x_t,x_{t-1})] + \theta_k\mE[\fprod{r_k,x_k-x_{k+1}}]\nonumber.
\end{align}
\blue{The last term on the right-hand side of the above inequality can be bounded using Young's inequality. Note that $r_k$ is either $F_{S}(x_k)-(1-\beta)F_{S}({x}_{k-1})-\beta F_{S}(\tilde x_{k-1})$ or $\bar r_k$ which for both cases the inner product can be upper bounded similarly using Lipschitz continuity Assumption \ref{lip def} as follows:
\begin{equation}\label{eq:inner-rk}
\mE[\fprod{r_k,x_k-x_{k+1}}]\leq \frac{(1-\beta)L}{2}\mE[\|x_k-x_{k-1}\|_\cX^2]+\frac{\beta L}{2}\mE[\|x_k-\tilde x_{k-1}\|_\cX^2]+\frac{L}{2}\mE[\|x_{k+1}-x_k\|_\cX^2].
\end{equation}
Therefore, combining \eqref{eq:first-inner-delta} and \eqref{eq:inner-rk}, using strong convexity of Bregman distance function and Lemma \ref{lem:aux} we conclude that}
\begin{align}\label{eq:inner-prod-q-delta}
    (*)
    &\leq (\theta_k+{(1-\beta)q})L\mE[\bD_X(x_{k+1},x_k)] +\ey{4(1-\beta)L} \sum_{t=i_kq+1}^k\mE[\bD_X(x_t,x_{t-1})] \\
    &\quad +\theta_k L(1-\beta)\mE[\bD_X(x_k,x_{k-1})]+\theta_k\beta L\mE[D_{\zi{X}}^{k-1}(x_k)],\nonumber
\end{align}
\ey{where in the last inequality we used the definition of $r_k$,  Assumption \ref{lip def}, and Young's inequality similar to \eqref{inner pro bound q}.} 
On the other hand, from the definitions of $\delta_k$ and $\bar r_k$ in Definition \ref{def param} one can easily verify that $\mathfrak{U}_k=\bar r_{k+1}-\delta_k-\theta_kr_k-\frac{1}{\sigma_k}((\grad \psi_\cX(x_{k+1})-\grad\psi_\cX(\hat x_k)))$. First, we observe that from Lipschitz continuity of $\grad\psi_\cX$ and definitions of $\hat x_k$ and $s_k$ we obtain
\begin{align}\label{eq:grad-psi}
   &\norm{\grad \psi_\cX(x_{k+1})-\grad\psi_\cX(\hat x_k)}_{\cX^*}^2\\&\nonumber =  \norm{\grad \psi_\cX(x_{k+1})-(1-\gamma_k)\grad\psi_\cX(x_k)-\gamma_k s_k}_{\cX^*}^2  \\
    &\quad \leq 2(1-\gamma_k)^2\norm{\grad \psi_\cX(x_{k+1})-\grad\psi_\cX(x_k)}_{\cX^*}^2 +2\gamma_k^2\norm{\grad \psi_\cX(x_{k+1})-s_k}_{\cX^*}^2 \nonumber\\
    &\quad \leq \ey{4}(1-\gamma_k)^2L_\psi^2\bD_X(x_{k+1},x_k) +\ey{4}\gamma_k^2L_\psi^2 D_X^k(x_{k+1}). \nonumber
\end{align}
Moreover, from Assumption \ref{lip def}, the above inequality, Lemma \ref{lem:error-stochastic-nonmonotone}, and the fact that $(\sum_{i=1}^m a_i)^2\leq m\sum_{i=1}^m a_i^2$ for any $\{a_i\}_{i=1}^m\subset \reals_+$ we obtain
\begin{align}\label{eq:norm2-uk}
    \mE[\norm{\mathfrak{U}_k}_{\cX^*}^2]&\leq 4\mE[\norm{\bar r_{k+1}}_{\cX^*}^2] + 4\mE[\norm{\delta_k}_{\cX^*}^2] + 4\theta_k^2\mE[\norm{r_k}_{\cX^*}^2] + \tfrac{4}{\sigma_k^2}\mE[\norm{\grad \psi_\cX(x_{k+1})-\grad\psi_\cX(\hat x_k)}_{\cX^*}^2]  \\
    &\leq 16((1-\beta)^2L^2+ \tfrac{L_{\psi}^2(1-\gamma_k)^2}{\sigma_k^2}) \mE[\bD_X(x_{k+1},x_k)]+16(\beta^2 L^2+ \tfrac{L_{\psi}^2\gamma_k^2}{\sigma_k^2}) \mE[D^{k}_X(x_{k+1})]\nonumber\\
    &\quad + 16\ey{(1-\beta)^a} L^2 \sum_{t=\eyh{i_kq+1}}^k \mE[\bD_X(x_t,x_{t-1})] + 16(1-\beta)^2L^2\theta_k^2 \mE[\bD_X(x_k,x_{k-1})] \nonumber\\
    &\quad +16\beta^2 L^2\theta_k^2 \mE[D^{k-1}_X(x_k)].\nonumber
\end{align}


Now, taking expectation from inequality \eqref{eq:opt-cond-bound}, adding \eqref{eq:inner-prod-q-delta} as well as \eqref{eq:norm2-uk} multiplied by $\rho$, and rearranging the terms lead to the desired result.
\end{proof}

\eyh{Based on the one-step analysis derived above, we need to impose some conditions on the parameters to ensure a telescopic term in \eqref{eq:one-step-nonmonotone}. In the following lemma, we specify these parameters to ensure a telescopic structure in \eqref{eq:one-step-nonmonotone} based on the constants $C_i^k$ for $i\in\{1,\hdots,5\}$ and any $k\geq 0$. Later, in Theorem \ref{thm:rate-nonmontone} we used these conditions to derive a convergence rate guarantee based on \eqref{eq:one-step-nonmonotone}.}
\begin{lemma}\label{lem:nonmontone-stepsize}
    Let $\theta_k = 1$, $q=n$, \ey{$\beta\in (0,1)$ such that $(1-\beta)^{2-a} q\leq 1$ for some $a\in [0,1]$}, $\gamma_k = \gamma \in (0,1)$, and $\sigma_k = \sigma\leq \min\{\frac{\vartheta(1-\gamma)}{(3+\sqrt{7}\vartheta\zeta/L_\psi)L}, \frac{\vartheta\gamma}{(2 + \zeta'\vartheta/L_\psi)\beta L}\}=\cO(\frac{1}{L})$ for any $\vartheta\in(0,1)$ where $\zeta\triangleq 1/(1+4L_\psi/\vartheta)$ and $\zeta' \triangleq 1/(1+2L_\psi/(\sqrt{2}\vartheta))$. Assuming that $\rho\leq \frac{\vartheta\zeta'}{32\beta L L_\psi}$, then for any $k\geq 0$, 
    \begin{subequations}
    \begin{align}
        &\cC^k_1+\cC^k_3+q\cC^k_5+(1-\vartheta)\tfrac{1-\gamma_k}{\sigma_k}\leq 0, \label{eq:cond-sigma-1}\\
        &\cC^k_2+\cC^k_4+(1-\vartheta)\tfrac{\gamma_k}{\sigma_k}\leq 0,\label{eq:cond-sigma-2}
    \end{align}
    \end{subequations}
    where $\cC^k_i$'s are defined in the statement of Lemma \ref{lem:non-monotne}.
\end{lemma}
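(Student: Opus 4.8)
The plan is to verify the two inequalities \eqref{eq:cond-sigma-1} and \eqref{eq:cond-sigma-2} directly by substituting the prescribed parameter values into the definitions of $\cC_i^k$ from Lemma~\ref{lem:non-monotne} and bounding each positive contribution against the negative ``anchor'' terms $-\tfrac{1-\gamma_k}{\sigma_k}$ and $-\tfrac{\gamma_k}{\sigma_k}$. First I would plug in $\theta_k=1$ and $q=n$ and collect all terms. For \eqref{eq:cond-sigma-1}, summing $\cC_1^k+\cC_3^k+q\cC_5^k$ produces: (i) a block of terms proportional to $\rho L^2$ coming from the $\mathfrak U_k$-norm bound, namely $8\rho\big((1-\beta)^2L^2+\tfrac{L_\psi^2(1-\gamma)^2}{\sigma^2}\big)+8\rho(1-\beta)^2L^2+8q\rho(1-\beta)^2\oc^2\uc^2L^2$; (ii) a block of ``Lipschitz'' terms of order $L$, namely $(1+(1-\beta)\oc\uc\sqrt q)L+(1-\beta)L+\sqrt q(1-\beta)\oc\uc L$; and (iii) the anchor $-\tfrac{1-\gamma}{\sigma}$. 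The goal is to show the sum of (i) and (ii) is at most $\vartheta\tfrac{1-\gamma}{\sigma}$, so that adding $(1-\vartheta)\tfrac{1-\gamma}{\sigma}$ keeps the total nonpositive.

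The key simplification is the choice $\beta = 1-\tfrac{1}{2\oc\uc\sqrt q+1}$, which forces $(1-\beta)\oc\uc\sqrt q = \tfrac{\oc\uc\sqrt q}{2\oc\uc\sqrt q+1}\le \tfrac12$ and hence $(1-\beta)\oc\uc \le \tfrac{1}{2\sqrt q}$ as well as $(1-\beta)\le 1$. With these, the order-$L$ block (ii) is bounded by $(1+\tfrac12)L + L + \tfrac12 L = 3L$, so it is absorbed provided $\sigma\le \tfrac{\vartheta(1-\gamma)}{6L}$ (using half of the budget $\vartheta\tfrac{1-\gamma}{\sigma}$). For the $\rho L^2$ block (i), I would first use $(1-\beta)\le 1$ and $q(1-\beta)^2\oc^2\uc^2 = \big((1-\beta)\oc\uc\sqrt q\big)^2\le \tfrac14$ to bound it by $\rho L^2\cdot\text{const} + \tfrac{8\rho L_\psi^2(1-\gamma)^2}{\sigma^2}$; the $\sigma^{-2}$ term is the delicate one. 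Here the hypothesis $\sigma\le \tfrac{\vartheta\zeta(1-\gamma)}{?}$-type bound together with $\rho\le \tfrac{\vartheta\zeta'}{16\sqrt2\beta LL_\psi}$ is used: writing $\tfrac{8\rho L_\psi^2(1-\gamma)^2}{\sigma^2}\le \tfrac12 L_\psi\cdot\tfrac{\vartheta\zeta(1-\gamma)}{\sigma}\cdot\tfrac{(1-\gamma)}{\sigma}\cdot\tfrac{16\rho L_\psi}{\vartheta\zeta}$ and then invoking $\sigma\le \tfrac{\vartheta(1-\gamma)}{(6+\vartheta\zeta/L_\psi)L}$ to convert one factor of $\tfrac{1-\gamma}{\sigma}$ into a constant times $L/\vartheta$, one should be able to bound this by a $\zeta$-fraction of $\vartheta\tfrac{1-\gamma}{\sigma}$; the definition $\zeta=1/(1+6L_\psi/\vartheta)$ is precisely engineered so that $6L + \tfrac{\vartheta\zeta}{L_\psi}\cdot(\text{the }\sigma^{-1}\text{ coefficient}) $ closes the budget. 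The analogous computation for \eqref{eq:cond-sigma-2} is lighter: $\cC_2^k+\cC_4^k = 8\rho\big(\beta^2L^2+\tfrac{L_\psi^2\gamma^2}{\sigma^2}\big)+8\rho\beta^2L^2+\beta L -\tfrac{\gamma}{\sigma}$, and one bounds $\beta L\le L$, $16\rho\beta^2L^2\le \tfrac12 L$ via $\rho\le \tfrac{\vartheta\zeta'}{16\sqrt2\beta LL_\psi}\le \tfrac{1}{32\beta L}$ (using $\vartheta,\zeta'\le1$, $L_\psi\ge1$), and handles the $\tfrac{8\rho L_\psi^2\gamma^2}{\sigma^2}$ term exactly as before using $\zeta'$ and the second stepsize bound $\sigma\le \tfrac{\vartheta\gamma}{(2+2\zeta'\vartheta/(\sqrt2 L_\psi))\beta L}$.

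I expect the main obstacle to be the bookkeeping around the $\sigma^{-2}$ terms: one must carefully split $\tfrac{1}{\sigma^2}=\tfrac{1}{\sigma}\cdot\tfrac1\sigma$, keep one factor $\tfrac{1-\gamma}{\sigma}$ (resp.\ $\tfrac{\gamma}{\sigma}$) to combine with the anchor, and use the explicit stepsize upper bound to replace the other factor with an $L$-dependent constant, all while tracking the exact constants $6$, $2$, $\sqrt2$, $16$ so that the $\zeta,\zeta'$ definitions make the inequalities tight rather than merely morally true. Once the substitutions are made the argument is a finite sequence of elementary estimates, so no conceptual difficulty remains beyond confirming that the stated constants were chosen consistently; I would organize the write-up as two short paragraphs, one per inequality, each listing the positive terms, applying the $\beta$-induced bounds, and then invoking the stepsize and $\rho$ hypotheses to conclude.
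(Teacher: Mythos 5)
Your bookkeeping of the terms is right: with $\theta_k=1$ the positive contributions to \eqref{eq:cond-sigma-1} are exactly the $\rho L^2$-block and the order-$L$ block you list, and the choice of $\beta$ gives $(1-\beta)\oc\uc\sqrt q\le \tfrac12$, so the order-$L$ block is indeed $\le 3L$ (the paper packages the same quantities as $\chi=(1-\beta)^2(\oc^2\uc^2 q+2)$ and $(1+\sqrt\chi)L$). The gap is precisely at the step you flag as delicate, and your proposed fix does not work: to absorb $\tfrac{8\rho L_\psi^2(1-\gamma)^2}{\sigma^2}$ into a fraction of the anchor $\vartheta\tfrac{1-\gamma}{\sigma}$ you must \emph{upper}-bound $\tfrac{1-\gamma}{\sigma}$, but the hypothesis $\sigma\le \tfrac{\vartheta(1-\gamma)}{(6+\vartheta\zeta/L_\psi)L}$ only yields the \emph{lower} bound $\tfrac{1-\gamma}{\sigma}\ge \tfrac{(6+\vartheta\zeta/L_\psi)L}{\vartheta}$, so "converting one factor of $\tfrac{1-\gamma}{\sigma}$ into a constant times $L/\vartheta$" goes in the wrong direction. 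No rearrangement of constants repairs this within a pure absorption argument: the offending term scales like $\sigma^{-2}$ while the available negative term scales like $\sigma^{-1}$, so the inequality you are trying to verify cannot be closed by budgeting against the anchor when $\sigma$ is merely capped from above (it genuinely requires $\sigma$ not to be too small as well, since $\rho>0$).

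The paper's proof avoids your formulation by a different mechanism: it multiplies \eqref{eq:cond-sigma-1} by $\sigma^2$ and treats it as the quadratic inequality $a_1\sigma^2-a_2\sigma+a_3\le 0$ with $a_1=8\chi\rho L^2+(1+\sqrt\chi)L$, $a_2=\vartheta(1-\gamma)$, $a_3=8\rho L_\psi^2(1-\gamma)^2$. The hypothesis on $\rho$ is then exactly what makes the discriminant nonnegative: $a_2^2\ge 4a_1a_3$ is rewritten as a quadratic in $\rho$, namely $b_1\rho^2+b_2\rho-\vartheta^2\le 0$ with $b_1=256\chi L^2L_\psi^2$, $b_2=32\sqrt\chi L L_\psi^2$, and the constants $\zeta,\zeta'$ are introduced to get a clean sufficient bound $\rho\le \vartheta\zeta/\sqrt{b_1}$ (and $\rho\le\vartheta\zeta'/\sqrt{d_1}$ for \eqref{eq:cond-sigma-2}), after which the stated cap on $\sigma$ is shown to lie below $a_2/(2a_1)$; the final comparison $\zeta/\sqrt{\chi}\le \zeta'/(\sqrt2\beta)$ reconciles the two $\rho$-bounds. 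This discriminant-in-$\rho$ step is the idea missing from your write-up; without it the role of the $\rho$ hypothesis and of $\zeta,\zeta'$ cannot be recovered. (As a side remark, the interval structure of the quadratic — it is nonpositive only between its two roots — is exactly the $\sigma^{-2}$ obstruction you ran into; the paper's argument checks only the upper root, so even there the treatment implicitly presumes $\sigma$ is taken at, or comparable to, the stated cap rather than arbitrarily small.)
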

\begin{proof}
Consider the desired inequality \eqref{eq:cond-sigma-1} and let $\sigma_k=\sigma$ and $\theta_k=1$. Multiplying both sides by $\sigma^2$ and defining \ey{$\chi_1\triangleq 2(1-\beta)^2+q(1-\beta)^a$ and $\chi_2\triangleq 2-\beta+5q(1-\beta)$} imply that \eqref{eq:cond-sigma-1} holds if and only if $a_1 \sigma^2-a_2\sigma+a_3\leq 0$, where \ey{$a_1\triangleq 16\chi_1\rho L^2+\chi_2L$}, $a_2\triangleq \vartheta(1-\gamma)$, and $a_3\triangleq 8\rho L_\psi^2(1-\gamma)^2$. Therefore, assuming that $a_2^2-4a_1a_3\geq 0$, $\sigma$ must satisfy $\sigma\leq \frac{a_2+\sqrt{a_2^2-4a_1a_3}}{2a_1}$ which holds if $\sigma\leq \frac{a_2}{2a_1}$.
Note that $a_2^2-4a_1a_3\geq 0$ can be rewritten as a quadratic inequality in terms of $\rho$ as follows
\begin{equation}\label{eq:cond-rho}
    b_1\rho^2+b_2\rho-\vartheta^2\leq 0,
\end{equation}
where \ey{$b_1\triangleq 512 \chi_1 L^2L_\psi^2$, and $b_2\triangleq 32 \chi_2 L L_\psi^2$}. 
From \eqref{eq:cond-rho} we have that $\rho\leq \frac{-b_2+\sqrt{b_2^2+4\vartheta^2 b_1}}{2b_1}$ and a simple algebra reveals $\sqrt{b_2^2+4b_1\vartheta^2}$ $\geq b_2+2\zeta\vartheta \sqrt{b_1}$, for any \ey{$\zeta\leq 1/(1+2L_\psi\chi_2/(\vartheta \sqrt{\chi_1}))$}. Therefore, one can immediately conclude that $\rho\leq \frac{\vartheta\zeta}{\sqrt{b_1}}$
\ey{ and together with $\frac{a_2}{2a_1}=\ey{\frac{16\vartheta(1-\gamma)L_\psi^2}{b_1\rho + b_2}}$ we conclude that $\sigma\leq \frac{\vartheta(1-\gamma)}{(\chi_2+\sqrt{\chi_1}\vartheta\zeta/(2L_\psi))L}$ implies that $\sigma\leq\frac{a_2}{2a_1}$}. \ey{Moreover, when $n$ is large enough, one can verify that $\beta$ can be selected such that $\chi_1\in [1,3],\chi_2\in [6,7]$.}

Next, considering inequality \eqref{eq:cond-sigma-2} one can follow the same line of proof to show the result. In particular, \eqref{eq:cond-sigma-2} leads to the following quadratic inequality $c_1\sigma^2-c_2\sigma+c_3\leq 0$ where $c_1\triangleq 16\rho\beta^2L^2+\beta L$, $c_2\triangleq \gamma\vartheta$, and $c_3\triangleq \ey{16}\rho L_\psi^2\gamma^2$. Assuming that $d_1\rho^2+d_2\rho-\vartheta^2\leq 0$ where $d_1\triangleq 1024 \beta^2L^2L_\psi^2$ and $d_2\triangleq 64\beta L L_\psi^2$, we conclude that $\sigma$ satisfies $\sigma \leq \frac{c_2}{2c_1}=\frac{\gamma\vartheta}{32\rho\beta^2L^2+2\beta L}$. 
Moreover, $\rho\leq \frac{-d_2+\sqrt{d_2^2+4\vartheta^2d_1}}{2d_1}$ holds when $\rho\leq \frac{\zeta'\vartheta}{\sqrt{d_1}}$ for any $\zeta'\leq 1/(1+\sqrt{2}L_\psi/\vartheta)$, hence, \eqref{eq:cond-sigma-2} holds if $\sigma\leq \frac{\vartheta\gamma}{(2 + \zeta'\vartheta/L_\psi)\beta L}$. Finally, comparing the two upper-bounds obtained for $\rho$ by noting that $\beta\in (0,1)$ we have that $\zeta/\sqrt{b_1}\leq \zeta'/\sqrt{d_1}$, hence, $\rho\leq \frac{\vartheta\zeta'}{\sqrt{d_1}}\leq \frac{\vartheta\zeta}{\sqrt{b_1}}$. 
\end{proof}

\begin{theorem}\label{thm:rate-nonmontone}
Let $\{x_k\}_{k\geq 0}$ be the sequence generated by VR-FoRMAB displayed in Algorithm \ref{alg: VI-spider+} initialized
from arbitrary vector $x_0 \in {\mathcal X}$. For any $k\geq 0$ let $\mathfrak{U}_k\triangleq F(x_{k+1}) - (v_k+\theta_k\za{r}_k+\frac{1}{\sigma_k}(\grad \psi_\cX(x_{k+1})-\grad\psi_\cX(\hat x_k)))$. Suppose Assumptions \ref{lip def}, \ref{assum:weak-minty}, and \ref{assum:bregman} hold and the step-size sequence $\sigma_k$ and parameters $\gamma_k,\beta$, and $q$ are selected as in the statement of Lemma \ref{lem:nonmontone-stepsize} for any $k\geq 0$. Then, for any $K> 1$, 
\begin{align}\label{eq:rate-nonmonotone}
    \sum_{k=0}^{K-2}\mE[\norm{\mathfrak{U}_k}_{\cX^*}^2]\leq \tfrac{1}{1-\vartheta}\max\{\cE_1,\cE_2\} \mathbf{D}_X(x^*,x_0)=\ey{\cO\left((q(1-\beta)^a+ q^2(1-\beta)^2)L^2\right)},
\end{align}
where $\cE_1\triangleq 16\ey{(2(1-\beta)^2+q(1-\beta)^a}L^2)/(1-\gamma)+16{L_\psi^2(1-\gamma)}/{\sigma^2}$ and $\cE_2\triangleq 32\beta^2L^2/\gamma +16 {L_\psi^2\gamma}/{\sigma^2}$.
\end{theorem}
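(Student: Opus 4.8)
The plan is to obtain the stated bound by summing the one-step inequality \eqref{eq:one-step-nonmonotone} of Lemma \ref{lem:non-monotne} over the iterations, telescoping the Bregman and inner-product terms, invoking the step-size conditions \eqref{eq:cond-sigma-1}--\eqref{eq:cond-sigma-2} of Lemma \ref{lem:nonmontone-stepsize} to turn the remaining $\bD_X(x_{k+1},x_k)$ and $D_X^k(x_{k+1})$ contributions into a negative ``energy'' budget, and then spending that budget against the upper bound \eqref{eq:norm2-uk} on $\mE[\|\mathfrak{U}_k\|_{\cX^*}^2]$ that was established inside the proof of Lemma \ref{lem:non-monotne}. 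Everything is already in expectation, so no extra conditioning step is needed.

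First I would fix the parameters exactly as in Lemma \ref{lem:nonmontone-stepsize} ($\theta_k=1$, $q=n$, the prescribed $\beta$, $\gamma_k=\gamma$, and $\sigma_k=\sigma$ at its stated upper value) and sum \eqref{eq:one-step-nonmonotone} over $k=0,\dots,K-1$. The differences $\mE[\langle\bar r_{k+1},x_{k+1}-x^*\rangle]-\theta_k\mE[\langle\bar r_k,x_k-x^*\rangle]$ telescope to $\mE[\langle\bar r_K,x_K-x^*\rangle]$ because $\bar r_0=0$ (since $x_{-1}=x_0$); this leftover inner product is then controlled by Young's inequality and Assumption \ref{lip def}, exactly as in \eqref{inner pro bound q}, yielding $L\,\mE[\bD_X(x^*,x_K)]$ plus fixed multiples of $\bD_X(x_K,x_{K-1})$ and $D_X^{K-1}(x_K)$. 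The terms $\tfrac{1-\gamma}{\sigma}\mE[\bD_X(x^*,x_k)-\bD_X(x^*,x_{k+1})]$ telescope directly, and $\tfrac{\gamma}{\sigma}\mE[D_X^k(x^*)-\bD_X(x^*,x_{k+1})]$ telescopes once \eqref{compare D^k and D} is applied; together these give $\tfrac1\sigma\big(\mE[\bD_X(x^*,x_0)]-\mE[\bD_X(x^*,x_K)]\big)$.

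The main technical obstacle is the coefficient bookkeeping of the $\cC_i^k$ terms after summation. I would reindex $\sum_k\cC_3^k\bD_X(x_k,x_{k-1})=\sum_k\cC_3^{k+1}\bD_X(x_{k+1},x_k)$ (using $\bD_X(x_0,x_{-1})=0$) and likewise $\sum_k\cC_4^kD_X^{k-1}(x_k)=\sum_k\cC_4^{k+1}D_X^k(x_{k+1})$ (using $D_X^{-1}(x_0)=0$), and---this is the delicate point---observe that in the nested sum $\sum_k\cC_5^k\sum_{t=i_kq+1}^{k}\bD_X(x_t,x_{t-1})$ each $\bD_X(x_t,x_{t-1})$ occurs only for indices $k$ in the same length-$q$ block as $t$ with $k\ge t$, hence at most $q$ times, so this term is $\le q\cC_5\sum_k\bD_X(x_{k+1},x_k)$ (this is precisely why \eqref{eq:cond-sigma-1} carries the factor $q$ in front of $\cC_5^k$). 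Collecting, the net multiplier of $\mE[\bD_X(x_{k+1},x_k)]$ for $0\le k\le K-2$ is $\cC_1+\cC_3+q\cC_5$ and of $\mE[D_X^k(x_{k+1})]$ is $\cC_2+\cC_4$; the boundary multipliers at $k=K-1$ (namely $\cC_1+(1-\beta)L$ for $\bD_X(x_K,x_{K-1})$ and $\cC_2+\beta L$ for $D_X^{K-1}(x_K)$, after absorbing the leftover of the Young bound on $\langle\bar r_K,x_K-x^*\rangle$), together with the multiplier $L-\tfrac1\sigma$ of $\bD_X(x^*,x_K)$, are all nonpositive under the prescribed step-size and can be dropped. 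Applying \eqref{eq:cond-sigma-1}--\eqref{eq:cond-sigma-2} and multiplying through by $\sigma$ then gives
\begin{align*}
(1-\vartheta)(1-\gamma)\sum_{k=0}^{K-2}\mE[\bD_X(x_{k+1},x_k)]+(1-\vartheta)\gamma\sum_{k=0}^{K-2}\mE[D_X^k(x_{k+1})]\le \mE[\bD_X(x^*,x_0)].
\end{align*}

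Finally I would sum \eqref{eq:norm2-uk} over $k=0,\dots,K-2$ and apply the same index shifts and at-most-$q$ block count to its right-hand side (again using Lemma \ref{lem:aux}), writing $\sum_{k=0}^{K-2}\mE[\|\mathfrak{U}_k\|_{\cX^*}^2]$ as a $\le(1-\gamma)\cE_1$ multiple of $\sum_k\mE[\bD_X(x_{k+1},x_k)]$ plus a $\gamma\cE_2$ multiple of $\sum_k\mE[D_X^k(x_{k+1})]$---here one uses $(1-\beta)\oc\uc\sqrt q\le\tfrac12$, hence $(1-\beta)^2(\oc^2\uc^2q+2)\le 1+2(1-\beta)^2$, to match the $\cE_1$ constant. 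Combining with the displayed inequality bounds $\sum_{k=0}^{K-2}\mE[\|\mathfrak{U}_k\|_{\cX^*}^2]$ by a constant multiple, of order $\max\{\cE_1,\cE_2\}$, of $\mE[\bD_X(x^*,x_0)]$, and substituting $q=n$ together with the prescribed $\beta,\gamma,\sigma$ gives the claimed $\cO(nL^2)$ order. The sum necessarily stops at $k=K-2$: bounding $\mE[\|\mathfrak{U}_{K-1}\|_{\cX^*}^2]$ would need a nonnegative budget for $\bD_X(x_K,x_{K-1})$, which has already been consumed in absorbing the leftover $\langle\bar r_K,x_K-x^*\rangle$ term. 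I expect essentially all the difficulty to lie in this accounting step---the index shift of the $\bD_X(x_k,x_{k-1})$/$D_X^{k-1}(x_k)$ sums, the overlap count that fuses the nested cumulative sum with the $q\cC_5^k$ term, and the sign checks on the non-telescoping boundary terms.
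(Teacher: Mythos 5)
Your proposal is correct and follows essentially the same route as the paper's proof: sum the one-step bound of Lemma \ref{lem:non-monotne}, use the conditions \eqref{eq:cond-sigma-1}--\eqref{eq:cond-sigma-2} of Lemma \ref{lem:nonmontone-stepsize} (together with the index shifts and the at-most-$q$ overlap count from Lemma \ref{lem:aux}, which the paper compresses into one line) to extract the negative budget, absorb $\langle \bar r_K, x_K-x^*\rangle$ via Young's inequality as in \eqref{inner pro bound q}, and then spend the budget against the summed bound \eqref{eq:norm2-uk} with the $\cE_1,\cE_2$ constants. The only discrepancy is the $\vartheta$-dependent prefactor (your accounting yields roughly $\max\{\cE_1,\cE_2\}/(1-\vartheta)$ rather than the stated $\vartheta\max\{\cE_1,\cE_2\}$), but this mirrors a bookkeeping looseness in the paper's own passage from \eqref{eq:sum-step-size} to \eqref{eq:rate-consecutive} and does not affect the claimed $\cO(nL^2)$ order.
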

\begin{proof}
Since the step size and algorithm's parameters satisfy the conditions of Lemma \ref{lem:nonmontone-stepsize} we have that \eqref{eq:cond-sigma-1} and \eqref{eq:cond-sigma-2} hold. Therefore, using Lemma \ref{lem:aux} and thanks to the telescopic summation we have
\begin{align}\label{eq:sum-step-size}
    &\sum_{k=0}^{K-1} \mE\big[\cC^k_1\mathbf{D}_X(x_{k+1},x_k)+ \cC^k_2 D_X^k(x_{k+1}) +\cC^k_3 \mathbf{D}_X(x_k,x_{k-1}) + \cC^k_4 D_X^{k-1}(x_{k})\\
     &\quad +  \cC^k_5 \sum_{t=\eyh{i_kq+1}}^k \bD_X(x_t,x_{t-1})\big]\nonumber\\
     &\leq -(1-\vartheta)\sum_{k=0}^{K-1}\mE[\tfrac{1-\gamma}{\sigma}\mathbf{D}_X(x_{k+1},x_k) + \tfrac{\gamma}{\sigma}D_X^k(x_{k+1})]. \nonumber
\end{align}
Next, summing the result of Lemma \ref{lem:non-monotne} over $k$ from $0$ to $K-1$ and using \eqref{eq:sum-step-size} we conclude that
\begin{align}\label{eq:sum}
    &(1-\vartheta)\sum_{k=0}^{K-1}\mE[\tfrac{1-\gamma}{\sigma}\mathbf{D}_X(x_{k+1},x_k) + \tfrac{\gamma}{\sigma}D_X^k(x_{k+1})]\\
    &\leq \mE[\langle \bar r_{K} , x_{K} -x^* \rangle] -\mE[\langle \bar r_0, x_{0} - x^* \rangle] +  \tfrac{1-\gamma} {\sigma} \mE[\mathbf{D}_X(x^*,x_0)-\mathbf{D}_X(x^*,x_{K})] \nonumber  \\
     &\quad  +\tfrac{\gamma} {\sigma} \mE[D^{K-1}_X(x^*)-\mathbf{D}_X(x^*,x_{K})], \nonumber \\
     &\leq \mE[{(1-\beta)L}\bD_X(x_K,x_{K-1})+\beta L D_X^{K-1}(x_K) + L\bD_X(x^*,x_K)] \nonumber  \\
     &\quad  +  \tfrac{1-\gamma} {\sigma} \mE[\mathbf{D}_X(x^*,x_0)-\mathbf{D}_X(x^*,x_{K})]+\tfrac{\gamma} {\sigma} \mE[D^{K-1}_X(x^*)-\mathbf{D}_X(x^*,x_{K})], \nonumber
\end{align}
where in the last inequality we followed a similar step as in \eqref{inner pro bound q} for $x=x^*$ to provide an upper bound on $\langle \bar r_K , x_K -x^* \rangle \za{-} \langle\bar r_0, x_0-x^* \rangle $ by using Young's inequality and noting that $\bar r_0=0$. Since $\sigma$ satisfies $\sigma\leq \min\{\frac{1-\gamma}{2L},\frac{\gamma}{2L}\}$ we can simplify the terms in \eqref{eq:sum} to obtain
\begin{align*}
    (1-\vartheta)\sum_{k=1}^{K-1}\mE[\tfrac{1-\gamma}{\sigma}\mathbf{D}_X(x_{k},x_{k-1}) + \tfrac{\gamma}{\sigma}D_X^{k-1}(x_{k})]\leq \tfrac{1} {\sigma} \mathbf{D}_X(x^*,x_0) .
\end{align*}
Multiplying both sides by $\sigma/(1-\vartheta)$ leads to
\begin{align}\label{eq:rate-consecutive}
    \sum_{k=1}^{K-1}\mE[ (1-
    \gamma) \mathbf{D}_X(x_{k},x_{k-1}) + \gamma D_X^{k-1}(x_{k})]\leq \tfrac{1}{1-\vartheta}\mathbf{D}_X(x^*,x_0) .
\end{align}

Next, recalling the upper-bound on $\mE[\norm{\mathfrak{U}_k}^2_{\cX^*}]$ in \eqref{eq:norm2-uk} in terms of the consecutive iterates and summing over $k=0$ to $K-2$ implies that
\begin{align*}
    &\sum_{k=0}^{K-2}\mE[\norm{\mathfrak{U}_k}_{\cX^*}^2]\leq\\
    & \quad \ey{16((1-\beta)^2L^2}+ \tfrac{L_{\psi}^2(1-\gamma)^2}{\sigma^2}) \sum_{k=0}^{K-2} \mE[\bD_X(x_{k+1},x_k)]+16(\beta^2 L^2+ \tfrac{L_{\psi}^2\gamma^2}{\sigma^2}) \sum_{k=0}^{K-2} \mE[D^{k}_X(x_{k+1})]\nonumber\\
    &+ 16(\ey{(1-\beta)^aq} + (1-\beta)^2)L^2 \sum_{k=0}^{K-2}\mE[\bD_X(x_k,x_{k-1})]  +16\beta^2 L^2 \sum_{k=0}^{K-2} \mE[D^{k-1}_X(x_k)].
\end{align*}
Now, using the definitions of $\cE_1$ and $\cE_2$ from the statement of the theorem in the last inequality and \ey{the facts that $x_{-1}=x_0$ and $(1-\beta)q\leq 1$}, we have that
\begin{align*}
   &\sum_{k=0}^{K-2}\mE[\norm{\mathfrak{U}_k}_{\cX^*}^2] \leq \sum_{k=1}^{K-1} \big(\cE_1 (1-\gamma) \mE[\bD_X(x_{k},x_{k-1})] + \cE_2 \gamma \mE[D^{k-1}_X(x_{k})]\big). \nonumber
\end{align*}
Finally, combining the above inequality with \eqref{eq:rate-consecutive} leads to the desired result.
\end{proof}

\begin{corollary}
Under the premises of Theorem \ref{thm:rate-nonmontone} by choosing \ey{$a=1$, $\gamma\in(0,1)$, and $\beta=1-\frac{1}{n}$} there exists $t\in\{1,\hdots,K-1\}$ such that 
\begin{align}
\mE[\textbf{dist}(\mathbf{0},F(x_{t})+\partial (h+\mathbb{I}_X)(x_{t}))]\leq \epsilon,
\end{align}
within $K=\cO(L^2/\epsilon^2)$ iterations. Moreover, the oracle
complexity to achieve an $\epsilon$-approximated solution is $\cO(n+L^2/\epsilon^2)$.
\end{corollary}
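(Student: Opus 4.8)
The plan is to promote the summable residual estimate of Theorem~\ref{thm:rate-nonmontone} to a best-iterate guarantee. The key observation is that the quantity $\mathfrak{U}_k$ appearing there was constructed precisely so that $\mathfrak{U}_k\in F(x_{k+1})+\partial(h+\mathbb{I}_X)(x_{k+1})$; this is exactly \eqref{eq:opt-inclusion-x} in the proof of Lemma~\ref{lem:non-monotne}. Hence $\textbf{dist}(\mathbf{0},F(x_{k+1})+\partial(h+\mathbb{I}_X)(x_{k+1}))\leq\norm{\mathfrak{U}_k}_{\cX^*}$ for every $k\geq 0$, and summing this over $k=0,\dots,K-2$ and invoking \eqref{eq:rate-nonmonotone} yields $\sum_{k=0}^{K-2}\mE[\textbf{dist}(\mathbf{0},F(x_{k+1})+\partial(h+\mathbb{I}_X)(x_{k+1}))^2]\leq\vartheta\max\{\cE_1,\cE_2\}\,\mathbf{D}_X(x^*,x_0)$.

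Next I would use a pigeonhole argument on this sum of $K-1$ nonnegative terms: choosing $t\in\{1,\dots,K-1\}$ so that $t-1$ minimizes $\mE[\norm{\mathfrak{U}_k}_{\cX^*}^2]$ over $k\in\{0,\dots,K-2\}$ gives $\mE[\textbf{dist}(\mathbf{0},F(x_t)+\partial(h+\mathbb{I}_X)(x_t))^2]\leq\vartheta\max\{\cE_1,\cE_2\}\,\mathbf{D}_X(x^*,x_0)/(K-1)$, after which Jensen's inequality for the concave map $s\mapsto\sqrt{s}$ turns this into $\mE[\textbf{dist}(\mathbf{0},F(x_t)+\partial(h+\mathbb{I}_X)(x_t))]\leq\sqrt{\vartheta\max\{\cE_1,\cE_2\}\,\mathbf{D}_X(x^*,x_0)/(K-1)}$.

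It then remains to estimate $\max\{\cE_1,\cE_2\}$ under the prescribed parameters, treating $L_\psi,\oc,\uc,\vartheta,\zeta,\zeta'$ as absolute constants. With $q=n$ one has $1-\beta=\Theta(1/\sqrt{n})$, and taking $\gamma$ of order $1/\sqrt{n}$ gives $1-\gamma=\Theta(1)$ and $\beta=\Theta(1)$, so the step-size rule of Lemma~\ref{lem:nonmontone-stepsize} permits $\sigma=\Theta(\min\{1/L,\gamma/L\})=\Theta(1/(\sqrt{n}\,L))$ and hence $1/\sigma^2=\Theta(nL^2)$. Substituting into $\cE_1=8(1+2(1-\beta)^2)L^2/(1-\gamma)+8L_\psi^2(1-\gamma)/\sigma^2$ shows its second term is $\Theta(nL^2)$ and dominates, so $\cE_1=\Theta(nL^2)$, while $\cE_2=16\beta^2L^2/\gamma+8L_\psi^2\gamma/\sigma^2=\Theta(\sqrt{n}L^2)$; thus $\max\{\cE_1,\cE_2\}=\cO(nL^2)$. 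Plugging this into the bound from the previous paragraph gives $\mE[\textbf{dist}(\mathbf{0},F(x_t)+\partial(h+\mathbb{I}_X)(x_t))]=\cO(\sqrt{nL^2\,\mathbf{D}_X(x^*,x_0)/(K-1)})$, so requiring the right-hand side to be at most $\epsilon$ forces $K=\cO(nL^2\,\mathbf{D}_X(x^*,x_0)/\epsilon^2)=\cO(nL^2/\epsilon^2)$. For the oracle complexity, a full evaluation of $F$ (costing $n$ component-oracle calls) occurs only when $\mathrm{mod}(k,q)=0$, i.e.\ once per epoch of length $q=n$, contributing $n\lfloor K/q\rfloor=\cO(K)$ calls in total, while each remaining iteration uses $\cO(S)$ calls; taking $S=1$ the total is $\cO(K)=\cO(nL^2/\epsilon^2)$.

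The step I expect to be the main obstacle is the order estimate of $\max\{\cE_1,\cE_2\}$: one must check carefully that, with $\gamma=\Theta(1/\sqrt{n})$ and $\beta=\Theta(1)$, it is the second branch of the $\min$ in Lemma~\ref{lem:nonmontone-stepsize} that is active, pinning $\sigma=\Theta(1/(\sqrt{n}\,L))$, and then keep track of which of $\cE_1,\cE_2$ dominates --- it is $\cE_1$, through the $L_\psi^2(1-\gamma)/\sigma^2$ term --- so that the announced $\cO(nL^2/\epsilon^2)$ iteration and oracle complexities indeed emerge. Everything else, namely the inclusion, the pigeonhole step, and the oracle bookkeeping, is routine.
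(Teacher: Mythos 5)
Your proposal is correct and follows essentially the same route as the paper's own proof: the inclusion $\mathfrak{U}_k\in F(x_{k+1})+\partial(h+\mathbb{I}_X)(x_{k+1})$, a best-iterate/pigeonhole step applied to the summed bound of Theorem \ref{thm:rate-nonmontone}, Jensen's inequality to pass from the second moment to the expected distance, and the order estimate $\max\{\cE_1,\cE_2\}=\cO(nL^2)$ under $\gamma=\cO(1/\sqrt{n})$, $\sigma=\Theta(1/(\sqrt{n}L))$. You merely carry out the $\cE_1,\cE_2$ bookkeeping and the per-epoch oracle count more explicitly than the paper does, which is fine.
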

\begin{proof}
From Theorem \ref{thm:rate-nonmontone} and defining $t=\argmin_{0\leq k\leq K-2}\{\norm{\mathfrak{U}_k}_{\cX^*}\}$ we conclude that for any $K>1$, $\mE[\norm{\mathfrak{U}_t}^2_{\cX^*}]\leq \frac{\max\{\cE_1,\cE_2\}}{(1-\vartheta)K-1} \mathbf{D}_X(x^*,x_0)$. Now, let $A(x_{k+1})\triangleq F(x_{k+1})+\partial (h+\mathbb{I}_X)(x_{k+1})$ and by noting that $\mathfrak{U}_{k}\in A(x_{k+1})$ for any $k\geq 0$ we have 
\begin{equation}
    \mE[\norm{\mathfrak{U}_t}^2_{\cX^*}]\geq (\mE[\norm{\mathfrak{U}_t}_{\cX^*}])^2\geq  \Big(\mE\big[\min_{u\in A(x_{t+1})}\norm{u}_{\cX^*}\big]\Big)^2=\big(\mE[\textbf{dist}(\mathbf{0},A(x_{t+1}))]\big)^2.
\end{equation}
Hence, one can readily conclude that $\mE[\textbf{dist}(\mathbf{0},A(x_{t+1}))]\leq \sqrt{\frac{\max\{\cE_1,\cE_2\}}{(1-\vartheta)K-1} \mathbf{D}_X(x^*,x_0)}$ which in light of selection of parameters as in the statement we have that $\max\{\cE_1,\cE_2\}=\cO(L^2)$ leading to the desired result. 
\end{proof}

\begin{remark}\label{rem:stepsize}
We can simplify the parameter selection of our proposed method by choosing \ey{$\beta = 1-\frac{1}{n}$, $\gamma \in (0,1)$, and $\sigma \leq \frac{(1-\gamma)}{6L}$. It is worth noting that, unlike the monotone setting the parameters $\beta$ and $\gamma$ do not vanish when the Bregman distance is specified to Euclidean distance.} This indeed shows the impact of novel momentum terms and stochastic operator approximation in our method to ensure the obtained complexity result.  
\end{remark}
\begin{remark}
The condition on the weak-MVI parameter $\rho$ in our analysis is $\rho<\frac{1}{32 LL_\psi(1+\sqrt{2}L_\psi)}$ which is slightly worse than some of the existing results \cite{pethick2023solving}. However, it is important to note that such a loose bound stems from our consideration of an arbitrary norm in our framework and using some crude bounds such as those in \eqref{eq:norm2-uk}. It is reasonable to expect that specializing our analysis for Euclidean space could lead to an improvement of the constant in the upper bound of $\rho$. Nevertheless, our result encompasses a broader range of problems, and notably, we have achieved an improved convergence rate compared to existing methods \cite{pethick2023escaping,pethick2023solving} which has appeared for the first time in the literature.
\end{remark}

\section{Experimental results}
\label{sec:experiments}
In this section, we implement VR-FoRMAB for solving DRO example in section \ref{intro} and a non-monotone two-player matrix game. We compare our method with other state-of-the-art methods. All experiments are performed on a machine running 64-bit Windows 11
with Intel i5-1135G7 @2.40GHz and 8GB RAM. 

\subsection{Distributionally Robust Optimization} We implement our 
method to solve the DRO problem \eqref{DRO relaxation} described in Section \ref{intro} and compare it with Stochastic Variance Reduced Accelerated Primal-Dual Method (SVR-APD) \cite{yazdandoost2023stochastic} and Mirror-Prox with Variance Reduction (VR-MP)  \cite{Alacaoglu2022Stochastic}. Following the setup in \cite{yazdandoost2023stochastic}, we consider a set of labeled data points consisting of feature vectors $\{a_i\}_{i=1}^{n} \subset \mathbb{R}^d$ and labels $\{b_i\}_{i=1}^{n} \subseteq \{+1,-1\}^{n} $. Let $\ell_i(x) = \log(1 + \text{exp}(-b_ia_i^\top x))$ and $V(y,\tfrac{1}{n}\mathbf 1_n) = \tfrac{1}{2}\|ny-\mathbf 1_n\|^2$ serves as the Chi-square divergence measure. Furthermore, we specify $X = [-10, 10]^d$ and $\rho = 50$. The comparison of methods is conducted across different datasets, and a summary of the datasets is presented in Table \ref{tab:my_table}. \eyh{For all methods, projection onto the simplex-set constraint in the maximization is computed} by utilizing the Bregman distance with generating function $ \psi_{\cY}(y) = \sum_{i=1}^{n} y_i \log(y_i)$. 
\eyh{We selected the VR-FoRMAB's parameters according to Remark \ref{beta = 0}, i.e., $\sigma =\frac{1-\beta}{(2\sqrt{n}+4)L}$, $\gamma= \beta = 0$, $S =1$, $q =n$. Similarly, the parameters of SVR-APD and VR-MP are selected based on their theoretical suggestion.} 
Let us define the Lagrangian function $\mathcal{L}(x,y) \triangleq f(x) +\sum_{i=1}^n L_i(x,y) - h(y)  $ and recall that $L_i(x,y) = ny_i\ell_i(x)-\lambda(\frac{1}{2}V_i(y_i,\tfrac{1}{n}\mathbf 1_n)-\tfrac{\rho}{n}))$, $f(x)= \mathbb I_{X \times \mathbb R_{+}}(u,\lambda)$and $h(y) = \mathbb I_{\Delta_n}(y) $. The results are depicted in Figure \ref{fig:DRO Num}, where we plot the results in terms of the difference of Lagrangian functions evaluated at a solution pair $(x^*,y^*)$, i.e. $\mathcal{L}(x_k,y^*)-\mathcal{L}(x^*,y_k)$\footnote{Note that $0\leq \mathcal{L}(x_k,y^*)-\mathcal{L}(x^*,y_k)\leq \sup_{z\in X}\fprod{F(z^*),z-z^*}=\cG(z_k)$.}, versus time. Moreover, in Table \ref{gap at last iter} we compare the gap function defined in \eqref{gap function} at the last iterate point for all algorithms. Our method demonstrates better performance compared to the other two approaches across all experiments. 
\begin{table}[!h]
\centering
\caption{Different datasets used in the experiment from LIBSVM \cite{chang2011libsvm}.}
{\begin{tabular}{|c|c|c|c|c|}
\hline
& \texttt{w2a} & \texttt{Mushroom}  & \texttt{Phishing} & \texttt{a7a}\\
\hline
samples & 3470 & 8124 &11055 &16100    \\
\hline
features & 300 & 112 &  64 &122 \\
\hline
\end{tabular}}

\label{tab:my_table}
\end{table}

\vspace{-20pt} 

\begin{figure}[htbp]
    \centering
    \includegraphics[scale=0.23]{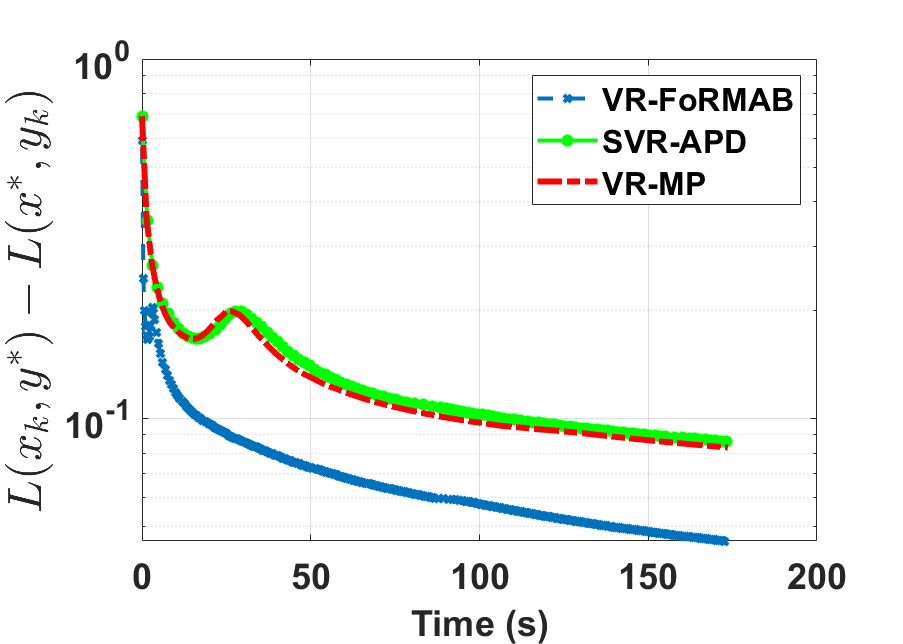}
  \includegraphics[scale=0.23]{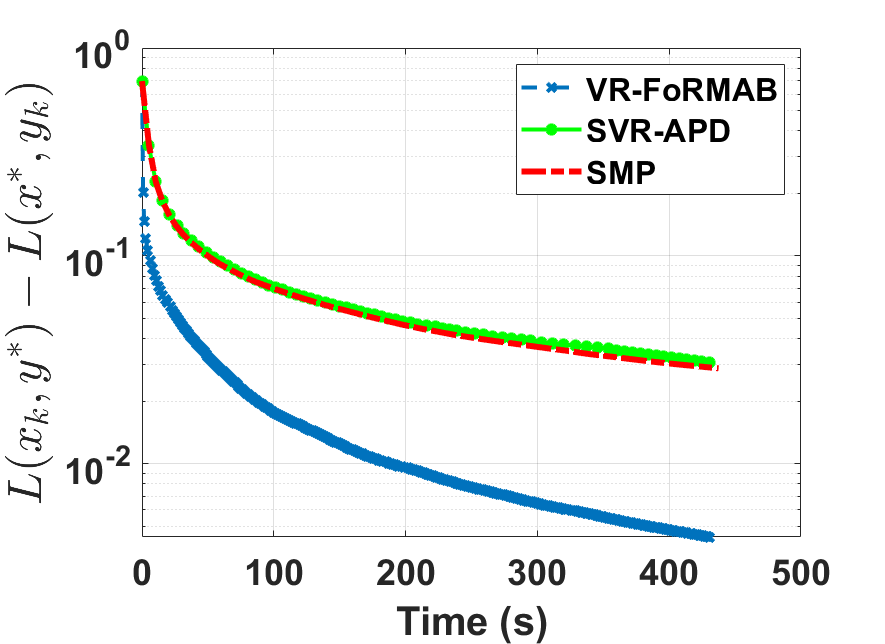}
      \includegraphics[scale=0.23]{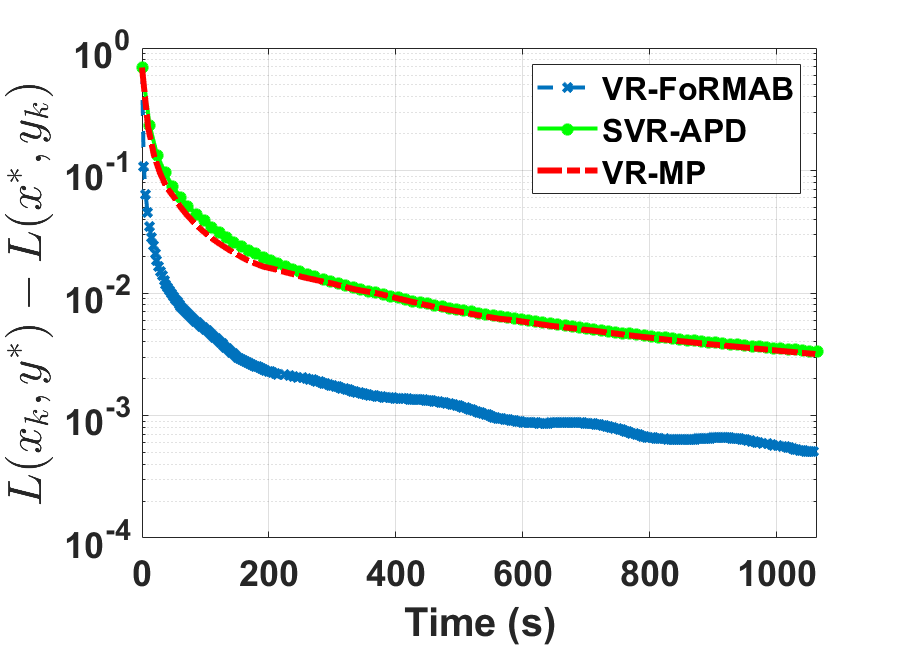}
    \includegraphics[scale=0.23]{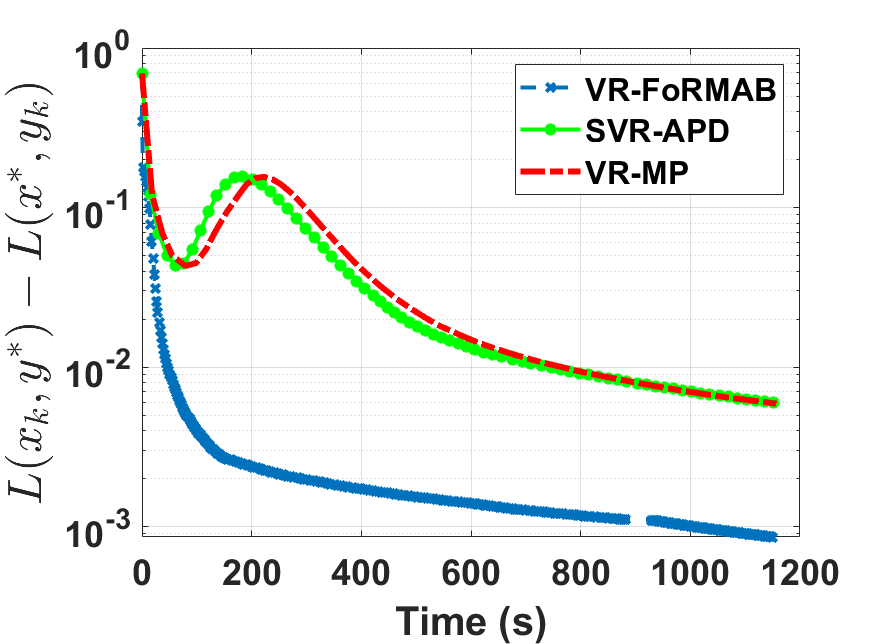}
    \caption{Comparison of the methods in terms of time for different datasets: Top row from left to right \texttt{w2a} and \texttt{Mushrooms}, Bottom row left to right \texttt{Phishing} and \texttt{a7a}.}
    \label{fig:DRO Num}
\end{figure}
\begin{table}[h!]
\centering
\caption{Comparing gap function $\sup_{(x,y)\in\cX\times\cY}\{\mathcal{L}(x_K,y)-\mathcal{L}(x,y_K) \}$ at the last iterate for different datasets.}
{\begin{tabular}{|c|c|c|c|c|}
\hline
                  & \texttt{w2a} & \texttt{Mushroom}  & \texttt{Phishing}& \texttt{a7a}\\ \hline
VR-FoRMAB                 &   \bf 5.1e-2     &   \bf 4.7e-3    &   \bf 1.2e-2               &            {\bf 1.1e-3}       \\ \hline
 SVRAPD             &     1.3e-1        &     2.6e-2              &     9.6e-2               &       2.3e-2            \\ \hline
VR-MP  & 1.2e-1   &1.9e-2 &  8.6e-2& 2.3e-2 \\ \hline
\end{tabular}}

\label{gap at last iter}
\end{table}
\subsection{Matrix Game} In this section, we compare the performance of our proposed method in a non-monotone setting with the state-of-the-art algorithms for solving a two-player matrix game formulated as follows:
\begin{equation}\label{eq:matrix-game}
    \min_{\norm{u}_2\leq 1}\max_{\norm{w}_2\leq 1} -\frac{\upsilon}{2}\norm{u}^2_2+\fprod{Au,w}+\frac{\upsilon}{2}\norm{w}^2_2.
\end{equation}
A first-order stationary solution of the above problem can be found by solving the corresponding VI formulated as \eqref{VI def} by letting $x=\begin{bmatrix} u\\ w\end{bmatrix}$, $F(x)=\begin{bmatrix} A^\top w-\upsilon u\\ -Au-\upsilon w\end{bmatrix}$, $h\equiv 0$, and $X=\{u\in\reals^n\mid \norm{u}_2\leq 1\}\times\{w\in\reals^m\mid \norm{w}_2\leq 1\}$. 
\blue{It is easy to verify that \( F \) is non-monotone, since the symmetric part of its Jacobian is indefinite when \( A \) is non-symmetric. Also, similar to  ~\cite{pethick2023escaping}, we can show \( F \) is Lipschitz continuous with constant \( L = \sqrt{\upsilon^2 + \|A\|_2^2} \), and $F$ satisfies Assumption \ref{assum:weak-minty} with constant $\rho=\frac{\upsilon}{\upsilon^2+\norm{A}_2^2}$.}  In our experiment, we let $\upsilon=1$, generate $A$ randomly from a standard Gaussian distribution then normalized such that $\norm{A}_2=40$, $n=m$, and $F_i(x)=\begin{bmatrix} nA_{i:} w_i-\upsilon u\\ -mA_{:i} u_i-\upsilon w\end{bmatrix}$. We compare our proposed method with VR-MP and BC-SEG+ \cite{pethick2023solving}. The parameters of our method are selected based on Remark \ref{rem:stepsize}, i.e., $\gamma=\beta=1-\frac{1}{2n+1}$ and $\sigma=\frac{\nu}{6L}$ for some $\nu>0$. For VR-MP we select the step size as $\sigma = \frac{\nu}{L}$ and for BC-SEG+ we used $\alpha_k=\nu/k$. In our experiment, we select $\nu\in\{0.1,1,10\}$ as a tuning parameter to observe the performance of the considered methods with different step-sizes. We tested the performance of the considered methods for various dimensions $n=m\in\{100,500,1000\}$ and the plots are depicted in Figure \ref{fig:non-monotone}. Since $x=\mathbf{0}$ is the equilibrium of this problem, the plots are in terms of the normalized solution norm, i.e., $\norm{x_k}/\sqrt{n}$, versus the number of sample operators evaluated by the algorithms. Conversely, BC-SEG+ demonstrates convergence in lower dimensions but exhibits sluggish convergence when $n=1000$, possibly due to increased variance in sampling and a diminishing step-size. Our proposed method, VR-FoRMAB, consistently converges to the solution across all experiments, surpassing other methods.

\begin{figure}
    \centering
    \includegraphics[scale=0.22]{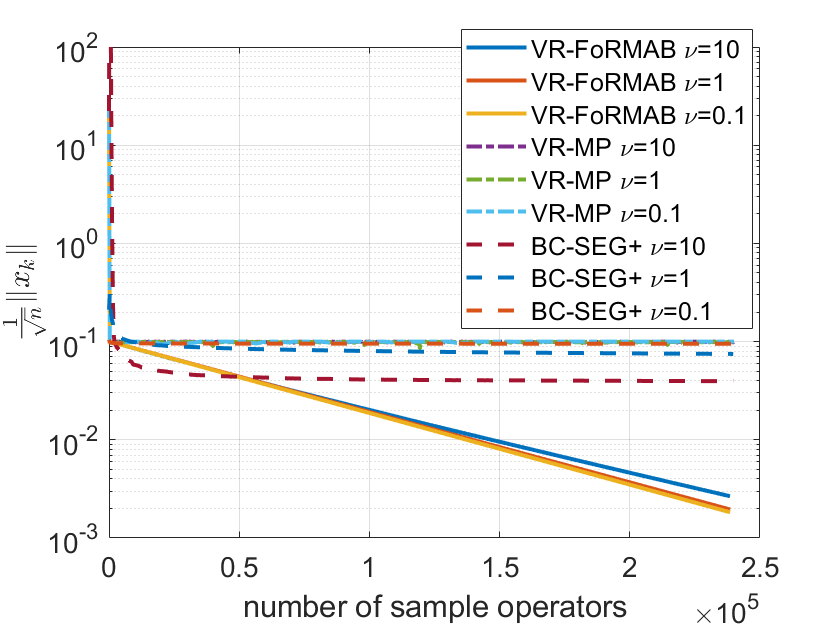} \hspace{-3mm} 
    \includegraphics[scale=0.22]{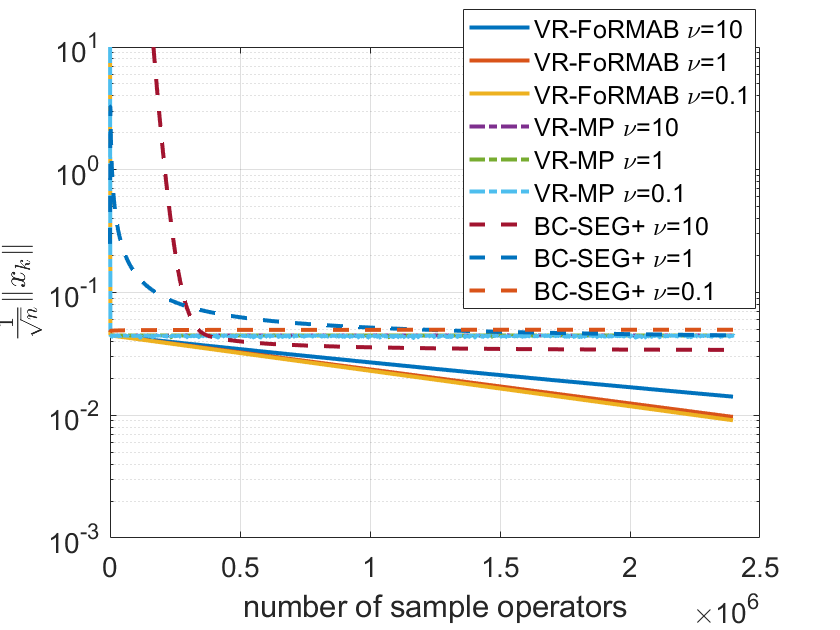} \hspace{-3mm} 
    \includegraphics[scale=0.22]{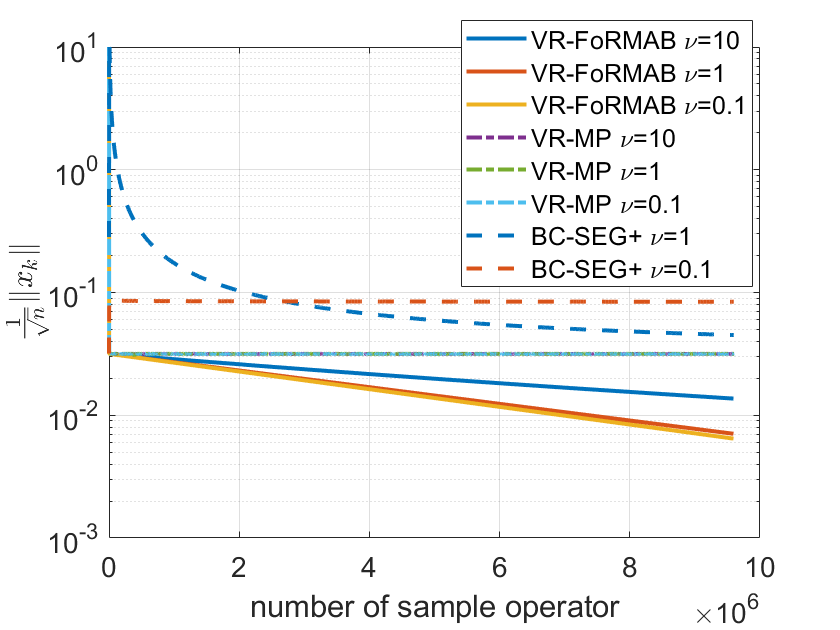}
    \caption{Comparison of methods for solving \eqref{eq:matrix-game} for various dimensions. From left to right, $n=100$, 500, and 1000. }
    \label{fig:non-monotone}
\end{figure}

\section{Conclusions}
\label{sec:conclusions}
In this paper, we studied VI problems with a finite-sum structure. Motivated by the lack of a unified method capable of adapting a Bregman distance function to address both monotone and non-monotone settings, we introduced a novel stochastic method called Variance-Reduced Forward Reflected Moving Average Backward (VR-FoRMAB). We established the convergence rate and oracle complexity of our method for the monotone case, achieving the optimal complexity of $\cO(n+\frac{\sqrt{n}}{\epsilon})$.
Additionally, we analyzed the convergence of our algorithm in the non-monotone setting under the weak Minty Variational Inequality (MVI) assumption and the Lipschitz continuity of the Bregman distance generating function. Our proposed method attains a complexity of $\mathcal{O}(n+\frac{1}{\epsilon^2})$, offering an improvement in oracle complexity over existing approaches.


\section*{Funding}
This work was supported by the National Science Foundation (NSF) under Grant ECCS-2231863. 

\section*{Disclosure statement}
The authors report there are no competing interests to declare.






\bibliographystyle{abbrv}
\bibliography{ref}

\begin{thebibliography}{10}

\bibitem{ahookhosh2021bregman}
M.~Ahookhosh, A.~Themelis, and P.~Patrinos.
\newblock A bregman forward-backward linesearch algorithm for nonconvex
  composite optimization: superlinear convergence to nonisolated local minima.
\newblock {\em SIAM Journal on Optimization}, 31(1):653--685, 2021.

\bibitem{alacaoglu2024extending}
A.~Alacaoglu, D.~Kim, and S.~J. Wright.
\newblock Extending the reach of first-order algorithms for nonconvex min-max
  problems with cohypomonotonicity.
\newblock {\em arXiv preprint arXiv:2402.05071}, 2024.

\bibitem{Alacaoglu2022Stochastic}
A.~Alacaoglu and Y.~Malitsky.
\newblock Stochastic variance reduction for variational inequality methods.
\newblock In {\em Conference on Learning Theory}, pages 778--816. PMLR, 2022.

\bibitem{allen2016variance}
Z.~Allen-Zhu and E.~Hazan.
\newblock Variance reduction for faster non-convex optimization.
\newblock In {\em International conference on machine learning}, pages
  699--707. PMLR, 2016.

\bibitem{bauschke2021generalized}
H.~H. Bauschke, W.~M. Moursi, and X.~Wang.
\newblock Generalized monotone operators and their averaged resolvents.
\newblock {\em Mathematical Programming}, 189:55--74, 2021.

\bibitem{bohm2022solving}
A.~B{\"o}hm.
\newblock Solving nonconvex-nonconcave min-max problems exhibiting weak minty
  solutions.
\newblock {\em arXiv preprint arXiv:2201.12247}, 2022.

\bibitem{cai2022accelerated}
Y.~Cai and W.~Zheng.
\newblock Accelerated single-call methods for constrained min-max optimization.
\newblock {\em arXiv preprint arXiv:2210.03096}, 2022.

\bibitem{chang2011libsvm}
C.-C. Chang and C.-J. Lin.
\newblock Libsvm: A library for support vector machines.
\newblock {\em ACM transactions on intelligent systems and technology (TIST)},
  2(3):1--27, 2011.

\bibitem{choudhury2023single}
S.~Choudhury, E.~Gorbunov, and N.~Loizou.
\newblock Single-call stochastic extragradient methods for structured
  non-monotone variational inequalities: Improved analysis under weaker
  conditions.
\newblock {\em Advances in Neural Information Processing Systems},
  36:64918--64956, 2023.

\bibitem{choudhury2024single}
S.~Choudhury, E.~Gorbunov, and N.~Loizou.
\newblock Single-call stochastic extragradient methods for structured
  non-monotone variational inequalities: Improved analysis under weaker
  conditions.
\newblock {\em Advances in Neural Information Processing Systems}, 36, 2024.

\bibitem{cinelli2021variational}
L.~P. Cinelli, M.~A. Marins, E.~A.~B. Da~Silva, and S.~L. Netto.
\newblock {\em Variational methods for machine learning with applications to
  deep networks}.
\newblock Springer, 2021.

\bibitem{daskalakis2020independent}
C.~Daskalakis, D.~J. Foster, and N.~Golowich.
\newblock Independent policy gradient methods for competitive reinforcement
  learning.
\newblock {\em Advances in neural information processing systems},
  33:5527--5540, 2020.

\bibitem{defazio2014saga}
A.~Defazio, F.~Bach, and S.~Lacoste-Julien.
\newblock Saga: A fast incremental gradient method with support for
  non-strongly convex composite objectives.
\newblock {\em Advances in neural information processing systems}, 27, 2014.

\bibitem{devraj2019stochastic}
A.~M. Devraj and J.~Chen.
\newblock Stochastic variance reduced primal dual algorithms for empirical
  composition optimization.
\newblock {\em Advances in Neural Information Processing Systems}, 32, 2019.

\bibitem{diakonikolas2021efficient}
J.~Diakonikolas, C.~Daskalakis, and M.~I. Jordan.
\newblock Efficient methods for structured nonconvex-nonconcave min-max
  optimization.
\newblock In {\em International Conference on Artificial Intelligence and
  Statistics}, pages 2746--2754. PMLR, 2021.

\bibitem{fang2018spider}
C.~Fang, C.~J. Li, Z.~Lin, and T.~Zhang.
\newblock {SPIDER}: Near-optimal non-convex optimization via stochastic
  path-integrated differential estimator.
\newblock {\em Advances in neural information processing systems}, 31, 2018.

\bibitem{goodfellow2014generative}
I.~Goodfellow, J.~Pouget-Abadie, M.~Mirza, B.~Xu, D.~Warde-Farley, S.~Ozair,
  A.~Courville, and Y.~Bengio.
\newblock Generative adversarial nets.
\newblock {\em Advances in neural information processing systems}, 27, 2014.

\bibitem{gorbunov2022stochastic}
E.~Gorbunov, H.~Berard, G.~Gidel, and N.~Loizou.
\newblock Stochastic extragradient: General analysis and improved rates.
\newblock In {\em International Conference on Artificial Intelligence and
  Statistics}, pages 7865--7901. PMLR, 2022.

\bibitem{guo2021variational}
W.~Guo, M.~I. Jordan, and T.~Lin.
\newblock A variational inequality approach to bayesian regression games.
\newblock In {\em 2021 60th IEEE Conference on Decision and Control (CDC)},
  pages 795--802. IEEE, 2021.

\bibitem{hamedani2021primal}
E.~Y. Hamedani and N.~S. Aybat.
\newblock A primal-dual algorithm with line search for general convex-concave
  saddle point problems.
\newblock {\em SIAM Journal on Optimization}, 31(2):1299--1329, 2021.

\bibitem{han2024lower}
Y.~Han, G.~Xie, and Z.~Zhang.
\newblock Lower complexity bounds of finite-sum optimization problems: The
  results and construction.
\newblock {\em Journal of Machine Learning Research}, 25(2):1--86, 2024.

\bibitem{iusem2017extragradient}
A.~N. Iusem, A.~Jofr{\'e}, R.~I. Oliveira, and P.~Thompson.
\newblock Extragradient method with variance reduction for stochastic
  variational inequalities.
\newblock {\em SIAM Journal on Optimization}, 27(2):686--724, 2017.

\bibitem{johnson2013accelerating}
R.~Johnson and T.~Zhang.
\newblock Accelerating stochastic gradient descent using predictive variance
  reduction.
\newblock {\em Advances in neural information processing systems}, 26, 2013.

\bibitem{juditsky2011solving}
A.~Juditsky, A.~Nemirovski, and C.~Tauvel.
\newblock Solving variational inequalities with stochastic mirror-prox
  algorithm.
\newblock {\em Stochastic Systems}, 1(1):17--58, 2011.

\bibitem{kavis2022adaptive}
A.~Kavis, S.~Skoulakis, K.~Antonakopoulos, L.~T. Dadi, and V.~Cevher.
\newblock Adaptive stochastic variance reduction for non-convex finite-sum
  minimization.
\newblock {\em Advances in Neural Information Processing Systems},
  35:23524--23538, 2022.

\bibitem{krichene2015efficient}
W.~Krichene, S.~Krichene, and A.~Bayen.
\newblock Efficient bregman projections onto the simplex.
\newblock In {\em 2015 54th IEEE Conference on Decision and Control (CDC)},
  pages 3291--3298. IEEE, 2015.

\bibitem{kulkarni2012revisiting}
A.~A. Kulkarni and U.~V. Shanbhag.
\newblock Revisiting generalized nash games and variational inequalities.
\newblock {\em Journal of Optimization Theory and Applications}, 154:175--186,
  2012.

\bibitem{lian2017finite}
X.~Lian, M.~Wang, and J.~Liu.
\newblock Finite-sum composition optimization via variance reduced gradient
  descent.
\newblock In {\em Artificial Intelligence and Statistics}, pages 1159--1167.
  PMLR, 2017.

\bibitem{luo2021near}
L.~Luo, G.~Xie, T.~Zhang, and Z.~Zhang.
\newblock Near optimal stochastic algorithms for finite-sum unbalanced
  convex-concave minimax optimization.
\newblock {\em arXiv preprint arXiv:2106.01761}, 2021.

\bibitem{malitsky2020forward}
Y.~Malitsky and M.~K. Tam.
\newblock A forward-backward splitting method for monotone inclusions without
  cocoercivity.
\newblock {\em SIAM Journal on Optimization}, 30(2):1451--1472, 2020.

\bibitem{mao2023bregman}
X.~Mao.
\newblock {\em Bregman generalized subgradient projected algorithms and
  applications}.
\newblock PhD thesis, University of British Columbia, 2023.

\bibitem{namkoong2016stochastic}
H.~Namkoong and J.~C. Duchi.
\newblock Stochastic gradient methods for distributionally robust optimization
  with f-divergences.
\newblock {\em Advances in neural information processing systems}, 29, 2016.

\bibitem{nemirovski2004prox}
A.~Nemirovski.
\newblock Prox-method with rate of convergence o (1/t) for variational
  inequalities with lipschitz continuous monotone operators and smooth
  convex-concave saddle point problems.
\newblock {\em SIAM Journal on Optimization}, 15(1):229--251, 2004.

\bibitem{nemirovski2009robust}
A.~Nemirovski, A.~Juditsky, G.~Lan, and A.~Shapiro.
\newblock Robust stochastic approximation approach to stochastic programming.
\newblock {\em SIAM Journal on optimization}, 19(4):1574--1609, 2009.

\bibitem{nesterov2007dual}
Y.~Nesterov.
\newblock Dual extrapolation and its applications to solving variational
  inequalities and related problems.
\newblock {\em Mathematical Programming}, 109(2):319--344, 2007.

\bibitem{neumann1945model}
J.~v. Neumann.
\newblock A model of general economic equilibrium.
\newblock {\em The Review of Economic Studies}, 13(1):1--9, 1945.

\bibitem{nguyen2017sarah}
L.~M. Nguyen, J.~Liu, K.~Scheinberg, and M.~Tak{\'a}{\v{c}}.
\newblock Sarah: A novel method for machine learning problems using stochastic
  recursive gradient.
\newblock In {\em International conference on machine learning}, pages
  2613--2621. PMLR, 2017.

\bibitem{palaniappan2016stochastic}
B.~Palaniappan and F.~Bach.
\newblock Stochastic variance reduction methods for saddle-point problems.
\newblock {\em Advances in Neural Information Processing Systems}, 29, 2016.

\bibitem{parise2019variational}
F.~Parise and A.~Ozdaglar.
\newblock A variational inequality framework for network games: Existence,
  uniqueness, convergence and sensitivity analysis.
\newblock {\em Games and Economic Behavior}, 114:47--82, 2019.

\bibitem{pethick2023solving}
T.~Pethick, O.~Fercoq, P.~Latafat, P.~Patrinos, and V.~Cevher.
\newblock Solving stochastic weak minty variational inequalities without
  increasing batch size.
\newblock {\em arXiv preprint arXiv:2302.09029}, 2023.

\bibitem{pethick2023escaping}
T.~Pethick, P.~Latafat, P.~Patrinos, O.~Fercoq, and V.~Cevher.
\newblock Escaping limit cycles: Global convergence for constrained
  nonconvex-nonconcave minimax problems.
\newblock {\em arXiv preprint arXiv:2302.09831}, 2023.

\bibitem{pethick2024stable}
T.~Pethick, W.~Xie, and V.~Cevher.
\newblock Stable nonconvex-nonconcave training via linear interpolation.
\newblock {\em Advances in Neural Information Processing Systems}, 36, 2024.

\bibitem{pichugin2024optimal}
A.~Pichugin, M.~Pechin, A.~Beznosikov, and A.~Gasnikov.
\newblock Optimal analysis of method with batching for monotone stochastic
  finite-sum variational inequalities.
\newblock {\em arXiv preprint arXiv:2401.07858}, 2024.

\bibitem{rockafellar1997convex}
R.~T. Rockafellar.
\newblock {\em Convex analysis}, volume~28.
\newblock Princeton university press, 1997.

\bibitem{schmidt2017minimizing}
M.~Schmidt, N.~Le~Roux, and F.~Bach.
\newblock Minimizing finite sums with the stochastic average gradient.
\newblock {\em Mathematical Programming}, 162:83--112, 2017.

\bibitem{shi2017bregman}
Z.~Shi, X.~Zhang, and Y.~Yu.
\newblock Bregman divergence for stochastic variance reduction: saddle-point
  and adversarial prediction.
\newblock {\em Advances in Neural Information Processing Systems}, 30, 2017.

\bibitem{simonetti2022example}
M.~Simonetti, D.~Perri, and O.~Gervasi.
\newblock An example of use of variational methods in quantum machine learning.
\newblock In {\em International Conference on Computational Science and Its
  Applications}, pages 597--609. Springer, 2022.

\bibitem{singh2018variational}
V.~V. Singh and A.~Lisser.
\newblock Variational inequality formulation for the games with random payoffs.
\newblock {\em Journal of Global Optimization}, 72:743--760, 2018.

\bibitem{song2023cyclic}
C.~Song and J.~Diakonikolas.
\newblock Cyclic coordinate dual averaging with extrapolation.
\newblock {\em SIAM Journal on Optimization}, 33(4):2935--2961, 2023.

\bibitem{tseng2000modified}
P.~Tseng.
\newblock A modified forward-backward splitting method for maximal monotone
  mappings.
\newblock {\em SIAM Journal on Control and Optimization}, 38(2):431--446, 2000.

\bibitem{tseng2008accelerated}
P.~Tseng.
\newblock On accelerated proximal gradient methods for convex-concave
  optimization.
\newblock {\em submitted to SIAM Journal on Optimization}, 2(3), 2008.

\bibitem{ui2016bayesian}
T.~Ui.
\newblock Bayesian nash equilibrium and variational inequalities.
\newblock {\em Journal of Mathematical Economics}, 63:139--146, 2016.

\bibitem{wang2023bregman}
X.~Wang and Z.~Wang.
\newblock A bregman inertial forward-reflected-backward method for nonconvex
  minimization.
\newblock {\em Journal of Global Optimization}, pages 1--28, 2023.

\bibitem{yang2020global}
J.~Yang, N.~Kiyavash, and N.~He.
\newblock Global convergence and variance-reduced optimization for a class of
  nonconvex-nonconcave minimax problems.
\newblock {\em arXiv preprint arXiv:2002.09621}, 2020.

\bibitem{yang2024enhanced}
Y.~Yang, W.~Li, B.~Song, Y.~Zou, and Y.~Pan.
\newblock Enhanced fault tolerant kinematic control of redundant robots with
  linear-variational-inequality based zeroing neural network.
\newblock {\em Engineering Applications of Artificial Intelligence},
  133:108068, 2024.

\bibitem{yazdandoost2023stochastic}
E.~Yazdandoost~Hamedani and A.~Jalilzadeh.
\newblock A stochastic variance-reduced accelerated primal-dual method for
  finite-sum saddle-point problems.
\newblock {\em Computational Optimization and Applications}, 85(2):653--679,
  2023.

\bibitem{zhang2019composite}
J.~Zhang and L.~Xiao.
\newblock A composite randomized incremental gradient method.
\newblock In {\em International Conference on Machine Learning}, pages
  7454--7462. PMLR, 2019.

\bibitem{zhang2022variational}
S.~Zhang, M.~Hadji, A.~Lisser, and Y.~Mezali.
\newblock Variational inequality for n-player strategic chance-constrained
  games.
\newblock {\em SN Computer Science}, 4(1):82, 2022.

\end{thebibliography}


\end{document}